\documentclass[AMA,Times1COL]{WileyNJDv5} 

\usepackage{amsmath}
\usepackage{xcolor}
\usepackage{mathtools}
\usepackage{subcaption}

\articletype{Research Article}%

\received{Date Month Year}
\revised{Date Month Year}
\accepted{Date Month Year}
\journal{Journal}
\volume{00}
\copyyear{2023}
\startpage{1}

\begin{document}

\title{Forward-Backward Dynamic Programming for LQG Dynamic Games with Partial and Asymmetric Information}

\author[1]{Yuxiang Guan}

\author[2]{Iman Shames}

\author[1]{Tyler Summers}

\authormark{GUAN \textsc{et al.}}
\titlemark{Forward-Backward Dynamic Programming for LQG Dynamic Games with Partial and Asymmetric Information}

\address[1]{\orgdiv{Department of Mechanical Engineering}, \orgname{The University of Texas at Dallas}, \orgaddress{\state{TX, 75080-3021}, \country{USA}}}


\address[2]{\orgdiv{Department of Electrical and Electronic Engineering}, \orgname{The University of Melbourne}, \orgaddress{\state{VIC, 3010}, \country{Australia}}}

\corres{Yuxiang Guan \email{yuxiang.guan@utdallas.edu}}



\fundingInfo{United States Air Force Office of Scientific Research under Grants FA9550-23-1-0424 and FA2386-24-1-4014, and by the National Science Foundation under Grant ECCS-2047040.}

\abstract[Abstract]{
We formulate and study a class of two-player zero-sum stochastic dynamic games with partial and asymmetric information. Information asymmetry introduces fundamental challenges involving \emph{belief representation} and \emph{theory of mind} issues, where agents must impute belief states and estimates of other agents to inform their own strategy. To avoid an infinite regress of higher-order beliefs amongst agents and obtain computationally implementable results, we focus on a linear quadratic Gaussian (LQG) model and consider strategies with limited internal state dimension. We present a novel iterative forward-backward algorithm to jointly compute belief states and equilibrium strategies and value functions for a finite-horizon problem. We also present a value iteration-like algorithm to jointly compute stationary belief states and equilibrium strategies for an average-cost infinite-horizon problem. An open-source implementation of the algorithms is provided, and we demonstrate the effectiveness of the proposed algorithms in numerical experiments.
}

\keywords{LQG games, Zero-sum games, Partial and asymmetric information, Approximate perfect Bayesian equilibrium, Dynamic programming}


\maketitle

\renewcommand\thefootnote{}

\renewcommand\thefootnote{\fnsymbol{footnote}}
\setcounter{footnote}{1}

\section{Introduction}\label{introduction}

Dynamic game theory provides a framework for analyzing and designing feedback strategies for decision-making agents that strategically interact in dynamic and uncertain environments \cite{bacsar1998dynamic,yuksel2023stochastic}. A prominent application of this framework is in robust control, where disturbances are modeled as adversarial players maximizing performance degradation \cite{bernhard1996min}. This game-theoretic perspective unifies $H_\infty$ control with min-max differential games and extends to stochastic settings through risk-sensitive formulations \cite{campi1996nonlinear} and multi-player differential games \cite{li2008stochastic}. Much of this theory assumes that all model parameters and the underlying system state are common knowledge for all agents, in which case dynamic programming techniques can be used to compute equilibrium feedback strategies. However, in many practical settings, agents must make decisions based on partial and asymmetric information about the models and state, fundamentally altering the nature of optimal strategies and the structure of equilibrium solutions.

The information asymmetry introduces fundamental challenges involving \textit{belief representation} and \textit{theory of mind}, where players must impute the belief states and estimates of other players to inform their strategy. This leads to an infinite regress of higher-order beliefs, which renders even representing strategies and value functions difficult. A further challenge is that the information asymmetry leads to motivation for signaling, obfuscation, and deception, in which the players utilize their actions to affect the information of other players to indirectly optimize their objective. These challenges have been recognized in static game theory since the 1960s, including in the seminal work of Harsanyi \cite{harsanyi1967games}. In the context of dynamic games, it means that control strategies may be infinite dimensional, or equivalently, each agent must store and use its entire history of available information at each time to make optimal decisions.

Dynamic games with partial and asymmetric information have also been studied since the 1960s \cite{ho1966optimal,behn1968class,rhodes1969differential,rhodes1969stochastic,ho1970differential,tamer1973multistage,basar1978decentralized,ba1978two}, though far less extensively than dynamic games with perfect state information. Recent work has addressed many challenges and made substantial progress on various aspects \cite{nayyar2013common,gupta2014common,bacsar2014stochastic,ouyang2016dynamic,sinha2016structured,gupta2016dynamic,pachter2017lqg,vasal2018systematic,heydaribeni2025structured,saritacs2020dynamic,vasal2021signaling,kartik2021upper,tang2023dynamic,arslan2023subjective,hambly2023linear,ouyang2024approach,arslan2024subjective,guan2025best}, which we review in detail in the next subsection. Much of the existing literature focuses on constructing LQG dynamic games or partially observable Markov decision process (POMDP) with special information structures and strategy spaces that enables the separation of estimation and control, followed by sequential decomposition or other simplifications. However, signaling incentives arising from the information asymmetry generally prevent the complete separation of estimation and control and significantly complicate the computation of equilibrium strategies.

In such games, a widely used equilibrium concept is perfect Bayesian equilibrium (PBE) \cite{fudenberg1991game}, a signaling equilibrium where players' beliefs depend on their strategies. Several decomposition methods have been developed to compute PBE and its variants \cite{sinha2016structured,saritacs2020dynamic,vasal2021signaling,tang2023dynamic,ouyang2024approach}, though these works employ modeling frameworks and approaches that differ from ours. In general, there does not exist a universal algorithm that decouples the interdependence of strategies and beliefs over time in calculating PBE. Best response dynamics have been explored in solving a two-player zero-sum LQG dynamic game with partial and asymmetric information \cite{guan2025best}, motivated by the observation that players' higher-order belief states can be approximated by low-order belief states with bounded error, allowing feedback strategies with limited internal state dimension to closely approximate PBE.

We formulate and analyze a class of two-player zero-sum LQG dynamic games with partial and asymmetric information. Under certain assumptions, we develop an explicit, computationally tractable forward-backward algorithm for computing an approximate perfect Bayesian equilibrium (APBE). Unlike existing work that relies on certain common information structures to separate estimation and control, our approach directly handles the general lack of such separation. Our main contributions are:
\begin{enumerate}
    \item We derive coupled forward and backward recursions for propagating belief states (Proposition \ref{prop1:partialinfofilters}) and computing cost-to-go functions and state estimate feedback strategies (Proposition \ref{DPalgorithm});
    \item We develop a novel iterative forward-backward algorithm to jointly compute belief states and equilibrium strategies for two-player zero-sum finite-horizon LQG dynamic games with partial and asymmetric information (Algorithm \ref{algorithm:forward-backward});
    \item We present a value iteration-like algorithm to jointly compute stationary belief states and equilibrium strategies for an average-cost infinite-horizon version of the problem (Algorithm \ref{algorithm:forward-backward-VI});
    \item To facilitate further progress in the area, we have released an open-source implementation of these algorithms, available at \url{https://gitlab.com/scratch7473433/partialinfodynamicprogramming};
    \item We show numerical experiments that demonstrate the effectiveness of the proposed algorithms and the obtained equilibrium strategies.
\end{enumerate}

\subsection*{Related Work}
Some of the key challenges in dynamic games with incomplete information were first identified and discussed in Chapter 12 of Isaacs' classic book on differential games \cite{isaacs1965differential}. Classes of pursuit-evasion differential games in which one of the players has limited information were studied by Ho \cite{ho1966optimal}, Behn and Ho \cite{behn1968class}, and Rhodes and Luenberger \cite{rhodes1969differential,rhodes1969stochastic}. Ho \cite{ho1966optimal} considered games where only one player's action affects the state and attempts to confuse the other player's attempt to estimate the state. Differential games in which one or both players has either perfect state information or uses an open-loop strategy have been examined by Behn and Ho \cite{behn1968class}, Rhodes and Luenberger \cite{rhodes1969differential}, and Ba\c{s}ar and Mintz \cite{tamer1973multistage}. The special structure in these problems allows classical separation and certainty-equivalence properties from single-player optimal control. Differential games in which both players have partial and asymmetric information were studied by Rhodes and Luenberger \cite{rhodes1969stochastic} and Willman \cite{willman1971some}. 
These early works already acknowledged the possibility that the equilibrium strategies could be infinite-dimensional due to a regress into higher-order beliefs, requiring strategies to be based on the entire history of available information.   

More recent work has sought to identify and study broader classes of information structures that still enable separation of estimation and control and then computation of strategies via sequential decomposition and dynamic programming techniques. Nayyar et al. \cite{nayyar2013common} and Gupta et al. \cite{gupta2014common, gupta2016dynamic} study a class of dynamic games where agent beliefs are based on common information and independent of the strategies employed. This structure eliminates signaling incentives, allowing a class of equilibria called Common Information Based Markov Perfect Equilibria to be characterized and computed via dynamic programming. Similarly, Pachter \cite{pachter2017lqg} examines a control-sharing information pattern that renders the game amenable to dynamic programming solutions. Tang et al. \cite{tang2023dynamic} investigate another information pattern with delayed observation sharing amongst teams, developing a backward induction solution approach.

Ouyang et al. \cite{ouyang2016dynamic} also employ a sequential decomposition approach to compute PBE in a class of partial information dynamic games with signaling. Vasal and Anastasopoulos \cite{vasal2018systematic} compute a special class of structured PBE using a forward-backward decomposition and a notion of player type to represent private information of players; the related work \cite{vasal2021signaling} by the same authors considers a linear-quadratic setting. Kartik and Nayyar \cite{kartik2021upper} compute upper and lower values for dynamic games with finite state spaces under several potentially asymmetric information structures and signaling, using a dynamic programming approach. Ouyang et al. \cite{ouyang2024approach} develop a sequential backward recursion for computing Bayesian Nash Equilibrium strategies based on compressed private and common information. Arslan and Yüksel \cite{arslan2023subjective} study a notion of subjective equilibrium under beliefs of exogenous uncertainty.

The work discussed above still has a strong focus on approaches that exploit the separation of estimation and control and can piece together equilibrium strategies with dynamic programming-type backward recursions. Further, to our best knowledge, none of these works have provided open-source, extensible computational implementations. In contrast, our combined forward-backward algorithm is not necessarily limited to problems with complete separation of estimation and control, and we provide an open-source implementation to facilitate further research.





Perspectives on games with asymmetric information have yielded groundbreaking work in other fields. Identification of crucial and surprising effects of information asymmetry in game theory has led to Nobel prize-winning work in information economics \cite{harsanyi1967games, akerlof1970market, aumann1976agreeing, spence1978job, stiglitz1981credit}. The seminal work of Harsanyi \cite{harsanyi1967games} developed a framework for games with imperfect information, defined a now widely studied class of Bayesian equilibria, and introduced the notion of ``type'' as an approach to avoid an infinite regress of higher-order beliefs.

The remainder of this work is organized as follows. Section \ref{sec:finite-horizon-LQG-game} defines a two-player zero-sum finite-horizon LQG dynamic game with partial and asymmetric information and develops a forward-backward algorithm to jointly compute belief states and equilibrium strategies of an APBE. Section \ref{Section:InfHorizon} extends this framework to the infinite-horizon setting, presenting a value iteration-like algorithm to jointly compute stationary belief states and equilibrium strategies of an APBE for the average-cost formulation. Section \ref{sec:numerical_experiments} demonstrates the effectiveness of the proposed algorithms through numerical experiments across various scenarios. Finally, Section \ref{sec:extensions} discusses several extensions and variations of the modeling framework and methodology.

\section{Finite Horizon LQG Dynamic Games with Partial and Asymmetric Information}\label{sec:finite-horizon-LQG-game}
\subsection{Problem Formulation}
\textbf{System Dynamics and Measurements.} We consider a two-player zero-sum finite-horizon LQG dynamic game with partial and asymmetric information. 
The system dynamics are linear, given by
\begin{equation}\label{eqn:sys_dynamics}
    x_{t+1} = A_t x_t + B_t^1 u_t^1 + B_t^2 u_t^2 + w_t,
\end{equation}
where $x_t \in \mathbf{R}^n$ is the system state, $u_t^1 \in \mathbf{R}^{m_1}$ is the control input for player $1$, $u_t^2 \in \mathbf{R}^{m_2}$ is the control input for player $2$, $w_t \sim \mathcal{N}(0, W_t)$ is a Gaussian random vector with zero mean and covariance matrix $W_t$. The initial state $x_0$ may be either fixed or a Gaussian random vector with mean $\bar x_0$ and covariance matrix $X_0$, i.e., $x_0 \sim \mathcal{N}(\bar x_0, X_0)$.

To model settings with partial/incomplete and asymmetric information, the players do not have exact knowledge of the state and actions of other players. Instead, each player receives at time $t$ a signal $y^i_t$ through noisy sensor measurements, which is modeled by the linear output equations
\begin{equation}\label{eqn:output}
\begin{aligned}
    y_t^1 &= C_t^1 x_t + v_t^1, \\
    y_t^2 &= C_t^2 x_t + v_t^2,
\end{aligned}
\end{equation}
where $y_t^1 \in \mathbf{R}^{p_1}$, $y_t^2 \in \mathbf{R}^{p_2}$, $v_t^1 \sim \mathcal{N}(0, V_t^1)$ and $v_t^2 \sim \mathcal{N}(0, V_t^2)$ are a Gaussian random vectors with zero mean and covariance matrices $V_t^1$ and $V_t^2$, respectively.

\textbf{Information Structure and Strategy Space.}
Since the players do not know the exact value of the state, they must in general make decisions based on a history of all available information. Accordingly, we define the information states
\begin{equation}\label{eqn:info}
\begin{aligned}
    I^1_t &= \left(y_0^1, y_1^1,\ldots,y_t^1, u_0^1, u_1^1,\ldots,u_{t-1}^1\right), \\
    I^2_t &= \left(y_0^2, y_1^2,\ldots,y_t^2, u_0^2, u_1^2,\ldots,u_{t-1}^2\right).
\end{aligned}
\end{equation}
In general, players seek strategies defined as functions of their available information sets. Player $1$ adopts the strategy $\pi \coloneqq (\pi_0, \pi_1, \ldots, \pi_{T-1})$, where $u_t^1 = \pi_t(I_t^1)$, and player $2$ adopts $\mu \coloneqq (\mu_0, \mu_1, \ldots, \mu_{T-1})$, where $u_t^2 = \mu_t(I_t^2)$. But to limit the complexity of the strategies and avoid an infinite regress of higher-order beliefs, we restrict attention to strategies that depend on state estimates with limited dimension rather than the full information states. Specifically, we consider strategies of the form $\pi = (\pi_0, \pi_1, \ldots, \pi_{T-1})$ with $u_t^1 = \pi_t(z_t^1)$ and $\mu = (\mu_0, \mu_1, \ldots, \mu_{T-1})$ with $u_t^2 = \mu_t(z_t^2)$, where $z_t^1, z_t^2 \in \mathbf{R}^n$ are state estimates processed from the information states $I_t^1$ and $I_t^2$ using the following linear filters
\begin{equation} \label{onestepfilters}
\begin{aligned}
    z^1_{t+1} &= A_t^1 z_t^1 + \bar B_t^1 u_t^1 + \bar L_t^1 \left(y_t^1 - C_t^1 z_t^1\right), \\
    z^2_{t+1} &= A_t^2 z_t^2 + \bar B_t^2 u_t^2 + \bar L_t^2 \left(y_t^2 - C_t^2 z_t^2\right).
\end{aligned}
\end{equation}
The filter matrices $A_t^1$, $A_t^2$, $\bar B_t^1$, $\bar B_t^2$, $\bar L_t^1$, and $\bar L_t^2$ will be characterized in Proposition \ref{prop1:partialinfofilters}. We further restrict attention to linear feedback strategies of the form
    \begin{equation} \label{linearestimatefeedback}
    \begin{aligned}
        u_t^1 &= K_t^1 z_t^1, \\
        u_t^2 &= K_t^2 z_t^2,
        \end{aligned}
    \end{equation}
with feedback gain matrices $ K_t^1 \in \mathbf{R}^{m_1 \times n}$ and $ K_t^2 \in \mathbf{R}^{m_2 \times n}$ for $t=0, \ldots, T-1$.
\begin{remark}
    Without special information structures such as partially nested \cite{ho1972team} and stochastically nested \cite{yuksel2009stochastic} configurations, linear control strategies are not optimal in general for the LQG dynamic games with asymmetric information \cite{witsenhausen1968counterexample}. However, linear strategies provide a natural and computationally tractable class of policies for LQG problems while still capturing essential strategic interactions.
\end{remark}
\begin{remark}
The classical output feedback $H_\infty$ control problem can be viewed as a special case of the framework developed here. The key distinction lies in the information structure: the classical setting assumes symmetric information, where both players share the same output observation $y_t$. Our formulation relaxes this restriction, allowing each player to hold a private and distinct information set $I_t^i$, which gives rise to an infinite regress of higher-order beliefs where each player must reason about the other's beliefs, about the other's beliefs about their beliefs, to infinity, rendering the problem significantly more complex than its symmetric counterpart.
\end{remark}

In contrast to the single-player LQG problem, the state estimate for each player cannot be made independent of the strategy of the opposing player, since their strategy is based on private information. This reflects a signaling phenomenon, where each player has an incentive to use its strategy to affect the state estimate of the opposing player. We will discuss in the next subsection how our approach handles this feature of the problem. 

\textbf{Objectives.}
For a fixed feedback strategy pair $(\mathbf{K}^1, \mathbf{K}^2)$, where $\mathbf{K}^1 \coloneqq (K^1_0, K^1_1, \ldots, K^1_{T-1})$ and $\mathbf{K}^2 \coloneqq (K^2_0, K^2_1, \ldots, K^2_{T-1})$, the objective function is
\begin{equation} \label{totalcostLQR}
\begin{aligned}
    J\left(\mathbf{K}^1, \mathbf{K}^2\right)  = \mathbf{E}_{x_0, w_t, v_t^1, v_t^2} \left[ \sum_{t=0}^{T-1} x_t^\top Q_t x_t + z_t^{1\top} K_t^{1\top} R_t K_t^1 z_t^1 + z_t^{2\top} K_t^{2\top} S_t K_t^2 z_t^2 
 + x_T^\top Q_T x_T \right],
\end{aligned}
\end{equation}
where $Q_t \succeq 0$, $R_t \succ 0$, and $S_t \prec 0$, $T$ is the time horizon, and the expectation is taken with respect to the initial state and disturbance and measurement noise sequences. For a zero-sum game, player $1$ seeks a strategy to minimize $J$ while player $2$ seeks a strategy to maximize $J$. The penalty $S_t$ on the maximizing player's input must be sufficiently large to prevent the upper value of the game from being unbounded, regardless of the minimizing player's strategy. When $\lambda_{\max}(S_t) \rightarrow -\infty \ \forall t$, the maximizing player's strategy approaches the zero strategy, and we recover the classical separated single-player LQG solution for the minimizer. 

\begin{assumption}[Common Knowledge]\label{asmp:common_knowledge}
    The system dynamics, all measurement models, and all cost function parameters are assumed to be \emph{common knowledge} of both players. The common knowledge model is denoted by $\mathcal{M} = \{ A_t, B_t^1, B_t^2, W_t, \bar x_0, X_0, C_t^1, C_t^2, V_t^1, V_t^2, Q_t, R_t, S_t \}_{t=0}^T.$
\end{assumption}


\begin{assumption}[Bounded Depth of Reasoning and Self-projecting Beliefs]\label{asmp:belief_projection}
    When determining filter parameters and feedback control strategies, we restrict each player's depth of reasoning to first-order belief. Specifically, we assume that: (1) each player $i$ forms beliefs about player $j$'s belief regarding the state, denoted $\mathbf{E}[z_t^j \mid z_t^i]$, and (2) each player projects their own zeroth-order belief $z_t^i$ (player $i$'s belief about state $x_t$) onto their first-order belief about the other player such that $\mathbf{E}[z_t^j \mid z_t^i]$ = $z_t^i$.
\end{assumption}

\begin{remark}
    Assumption \ref{asmp:common_knowledge} is standard in game-theoretic settings and enables each player to independently reason about their own best responses and those of other players when determining optimal strategies. 
\end{remark}
\begin{remark}
    Assumption \ref{asmp:belief_projection}, which bounds the depth of reasoning to first-order beliefs with self-projection, is the most restrictive of our assumptions. While this may limit players' rationality from certain perspectives, it serves three important purposes: (1) it provides computational tractability by avoiding the infinite regress of higher-order beliefs, (2) it preserves key challenges inherent in these games, most notably the lack of complete separation between estimation and control, and (3) it is supported by numerical observations in LQ games demonstrating that higher-order belief states can be closely approximated by low-order belief states with bounded error \cite{guan2025best}. Similar bounded rationality assumptions have been successfully employed in existing work \cite{schwarting2021stochastic}. 
\end{remark}
In Section \ref{sec:numerical_experiments}, we quantify the resulting optimality gap and approximation error under various scenarios, demonstrating that these assumptions yield effective approximate equilibria in practice.

Under Assumptions \ref{asmp:common_knowledge} and \ref{asmp:belief_projection}, the goal of this work is to obtain an APBE strategy pair $(\mathbf{K}^{1*}, \mathbf{K}^{2*})$ where 
\begin{equation}
    J\left(\mathbf{K}^{1*}, \mathbf{K}^{2}\right) \leq  J\left(\mathbf{K}^{1*}, \mathbf{K}^{2*}\right) \leq  J\left(\mathbf{K}^1, \mathbf{K}^{2*}\right), \quad \forall \ \mathbf{K}^1, \mathbf{K}^2,
\end{equation}
i.e., each player performs worse by deviating from their equilibrium strategy.




In the following subsections, we develop an iterative forward-backward algorithm to compute such equilibrium strategies. The approach consists of a (forward) propagation of belief states using the state estimates \eqref{onestepfilters} for a given strategy pair and a (backward) dynamic programming recursion to compute equilibrium feedback strategies and value functions for given belief states. Due to the partial and asymmetric information and the signaling phenomenon, the forward and backward recursions for estimation and control are coupled and cannot be completely separated in general: the belief propagation depends on the strategies, and the strategies depend on how the beliefs are propagated. Solving the problem requires \emph{jointly} finding filter parameters and control strategies that satisfy both recursions simultaneously. This can be viewed as solving a (discrete-time) two-point boundary value problem. The proposed forward-backward algorithm is reminiscent of block coordinate descent in optimization.

\subsection{Forward State Estimation with Partial and Asymmetric Information} \label{partialinfofilter}
We start with the state estimation problem to characterize the equilibrium filter parameters when the feedback strategies are fixed. The filtering problem involves a forward recursion to propagate belief states as measurements are received. In the LQG setting, the means and a joint covariance matrix are propagated. We will see that the estimation and control problems are coupled and cannot be separated; the equilibrium filters require the feedback control gains and the equilibrium strategies may require the filtering gains.

Each player processes available information using filters of the form \eqref{onestepfilters}. The estimation errors are defined by
\begin{equation}
\begin{aligned}
    e_t^1 = x_t - z_t^1, \quad  e_t^2 = x_t - z_t^2.
    \end{aligned}
\end{equation}
In the single-player state estimation problem, the estimation error can be made independent of the input signal (by selecting $\bar B_t^1 = B_t^1$, so that the input term cancels in the estimation error dynamics). However, in our information pattern, player $i$ does not know player $j$'s input, so their estimation error \emph{cannot be made independent of the opposing player's input}. The control strategies affect the estimation error dynamics, leading to a signaling incentive. This means that the state estimation problem is ill-posed without a specification of the control strategies. Therefore, in this section we assume that the feedback gain matrices $\{ K_t^1 \}$ and $\{ K_t^2\}$ for $t=0,\ldots, T-1$ for the linear strategies \eqref{linearestimatefeedback} are fixed and common knowledge.

Our goal in this section is to find filter parameters $(A_t^1, \bar B_t^1, \bar L_t^1)$ and $(A_t^2, \bar B_t^2, \bar L_t^2)$ for $t=0,\ldots, T-1$ to produce unbiased state estimates that optimize (player $1$ minimizes, player $2$ maximizes) the expected total weighted estimation error
\begin{equation} \label{estimationerror}
\mathbf{E}_{w_t ,  v_t^1, v_t^2  } \sum_{t=0}^T  \begin{bmatrix}
     e_t^1 \\ e_t^2 
 \end{bmatrix}^\top
 \begin{bmatrix}
     \Gamma_t^{11} & \Gamma_t^{12}  \\
      \Gamma_t^{12\top} & \Gamma_t^{22} 
 \end{bmatrix}
  \begin{bmatrix}
     e_t^1 \\ e_t^2
 \end{bmatrix},
\end{equation}
where the weight matrix diagonal blocks satisfy $\Gamma_t^{11} \succ 0$ and $\Gamma_t^{22} \prec 0$.

\begin{remark}
    Unlike the standard Kalman filter where the filter design is independent of how we assign weights to estimation errors, in the two-player zero-sum LQG dynamic game the weight matrix $\Gamma_t$ does affect the equilibrium filter gains. When $\Gamma_t^{12} = 0$, the equilibrium filter gains are independent of the diagonal blocks $\Gamma_t^{11}$ and $\Gamma_t^{22}$. However, the feedback control strategies induce coupling in the estimation error covariance. If the feedback control gains are zero, the error dynamics decouple, and the joint filter reduces to two independent standard Kalman filters. The weighted estimation error also captures players' incentives to use their strategies to affect the information of other players to indirectly optimize their objectives. 
\end{remark}

The joint error covariance matrix is denoted by and partitioned as
$$\Sigma_t := \mathbf{E}\begin{bmatrix} e_t^1 \\ e_t^2 \end{bmatrix} \begin{bmatrix} e_t^1 \\ e_t^2 \end{bmatrix}^\top = \begin{bmatrix} \Sigma_t^{11} & \Sigma_t^{12} \\ \Sigma_t^{12\top} & \Sigma_t^{22}\end{bmatrix}.$$

It is common to split the belief propagation into a priori and a posteriori steps, where an a priori step updates the state estimate and covariance using the dynamic model before a measurement is received, and an a posteriori step updates those quantities upon receiving output measurements. We use $z_t^{i-}$ and $z_t^{i+}$ to denote the a priori and a posteriori estimates, respectively, and $\Sigma_t^{-}$ and $\Sigma_t^{+}$ to denote the a priori and a posteriori joint estimation error covariances, respectively.

\begin{proposition} \label{prop1:partialinfofilters}
Under Assumptions \ref{asmp:common_knowledge} and \ref{asmp:belief_projection}, the following forward propagation of state estimates for each player optimizes the estimation error in \eqref{estimationerror}:
\begin{itemize}
\item \textbf{Initialize:}
\begin{equation} \label{initialize}
\begin{aligned}
z_0^{1+} &= z_0^{2+} = \mathbf{E} x_0 = \bar x_0, \\
\Sigma_0^+ &=   \begin{bmatrix} X_0  & 0 \\ 0 & X_0 \end{bmatrix}.
\end{aligned}
\end{equation}

\item \textbf{A priori update:} For $t=1,\ldots, T-1$
\begin{equation} \label{apriori}
\begin{aligned}
z_{t+1}^{1-} &= \left(A_t + B_t^2 K_t^2\right) z_t^{1+} + B_t^1 u_t^1, \\
z_{t+1}^{2-} &= \left(A_t + B_t^1 K_t^1\right) z_t^{2+} + B_t^2 u_t^2, \\
\Sigma_{t+1}^- &=  \bar A_t \Sigma_t^+ \bar A_t^\top + \bar W_t, 
\end{aligned}
\end{equation}
\begin{equation} \label{apriorimatrices}
\begin{aligned}
\bar A_0 =  \begin{bmatrix} A_0  & 0 \\ 0 & A_0 \end{bmatrix}, \quad 
\bar A_t =  \begin{bmatrix} A_t + B_t^2 K_t^2  & - B_t^2 K_t^2 \\ -B_t^1 K_t^1 & A_t + B_t^1 K_t^1 \end{bmatrix}, \quad \bar W_t =  \begin{bmatrix} W_t  & 0 \\ 0 & W_t \end{bmatrix}, \ \forall t.
\end{aligned}
\end{equation}

\item \textbf{A posteriori update:} For $t=1,\ldots, T$
\begin{equation} \label{aposteriori}
\begin{aligned}
z_{t}^{1+} &=  z_t^{1-} + L_t^1 \left(y_t^1 - C_t^1 z_t^{1-}\right), \\
z_{t}^{2+} &=  z_t^{2-} + L_t^2 \left(y_t^2 - C_t^2 z_t^{2-}\right), \\
\Sigma_{t}^+ &= \begin{bmatrix} I - L_t^1 C_t^1 & 0 \\ 0 & I - L_t^2 C_t^2 \end{bmatrix} \Sigma_t^- \begin{bmatrix} I - L_t^1 C_t^1 & 0 \\ 0 & I - L_t^2 C_t^2 \end{bmatrix}^\top + \begin{bmatrix} L_t^1  & 0 \\ 0 & L_t^2 \end{bmatrix} \begin{bmatrix} V_t^1  & 0 \\ 0 & V_t^2 \end{bmatrix} \begin{bmatrix} L_t^1  & 0 \\ 0 & L_t^2 \end{bmatrix}^\top,
\end{aligned}
\end{equation}
where the equilibrium filter gains $L_t^1$ and $L_t^2$ are obtained by jointly solving the coupled linear equations
\begin{equation} \label{optimalgains}
\begin{aligned}
L_t^1 &= \Sigma_t^{11-} C_t^{1\top} \left( C_t^{1} \Sigma_{t}^{11-} C_t^{1\top} + V_t^1 \right)^{-1} + \left(\Gamma_t^{11}\right)^{-1}  \Gamma_t^{12} \left(\Sigma_t^{12-\top} C_t^{1\top} + L_t^2 C_t^2 \Sigma_t^{12-\top} C_t^{1\top} \right), \\
L_t^2 &= \Sigma_t^{22-} C_t^{2\top} \left( C_t^{2} \Sigma_{t}^{22-} C_t^{2\top} + V_t^2 \right)^{-1} + \left(\Gamma_t^{22}\right)^{-1}  \Gamma_t^{12\top} \left(\Sigma_t^{12-} C_t^{2\top} + L_t^1 C_t^1 \Sigma_t^{12-} C_t^{2\top} \right) .
\end{aligned}
\end{equation}
\end{itemize}
The a priori and a posteriori updates can be combined to form a single update of the form \eqref{onestepfilters}, where
\begin{equation} \label{onestepparams}
\begin{aligned}
A_t^1 = A_t + B_t^2 K_t^2, \quad A_t^2 = A_t + B_t^1 K_t^1, \quad 
\bar B_t^1 = B_t^1, \quad \bar B_t^2 = B_t^2, \quad \bar L_t^1 = A_t^1 L_t^1, \quad \bar L_t^2 = A_t^2 L_t^2.
\end{aligned}
\end{equation}
\end{proposition}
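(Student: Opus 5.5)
We argue in three steps: we fix the a priori filter structure by unbiasedness plus the self-projection of Assumption~\ref{asmp:belief_projection}, derive the joint error covariance recursions, and obtain the gains \eqref{optimalgains} as the stationarity (saddle-point) conditions of the weighted error \eqref{estimationerror} with respect to $L_t^1$ and $L_t^2$.

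\textbf{A priori step.} Player 1 does not observe $u_t^2 = K_t^2 z_t^2$. By Assumption~\ref{asmp:belief_projection}, player 1 replaces $z_t^2$ with $\mathbf{E}[z_t^2\mid z_t^1]=z_t^{1+}$, so the natural prediction is $z_{t+1}^{1-} = A_t z_t^{1+} + B_t^1 u_t^1 + B_t^2 K_t^2 z_t^{1+}$, which is \eqref{apriori}; player 2's prediction is symmetric. Subtracting from \eqref{eqn:sys_dynamics} gives
\[
e_{t+1}^{1-} = (A_t + B_t^2K_t^2)e_t^{1+} - B_t^2K_t^2 e_t^{2+} + w_t,
\qquad
e_{t+1}^{2-} = -B_t^1K_t^1 e_t^{1+} + (A_t + B_t^1K_t^1)e_t^{2+} + w_t .
\]
Thus the stacked error evolves with $\bar A_t$, and since the same $w_t$ enters both errors, the noise covariance is $\bar W_t$ (the off-diagonal $W_t$ blocks should be checked against the stated $\bar W_t$). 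At $t=0$ no signaling term appears, which accounts for $\bar A_0$. Because $\mathbf{E} e_0^{i+}=0$ by \eqref{initialize} and the error dynamics are linear with zero-mean noise, induction shows that unbiasedness is preserved. Taking the covariance yields $\Sigma_{t+1}^-=\bar A_t\Sigma_t^+\bar A_t^\top+\bar W_t$.

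\textbf{A posteriori step and gains.} With the innovation update \eqref{aposteriori}, we have $e_t^{i+}=(I-L_t^iC_t^i)e_t^{i-}-L_t^iv_t^i$, where $v_t^1$ and $v_t^2$ are independent of each other and of the prior errors. This gives the block formula for $\Sigma_t^+$. The stage cost is $\operatorname{tr}(\Gamma_t\Sigma_t^+)$. We treat each time step greedily and myopically, as in the Kalman derivation, regarding the effect of $L_t$ on future covariances as already encoded through fixed strategies. Under that treatment the stage cost is a quadratic function of $(L_t^1,L_t^2)$, convex in $L_t^1$ (since $\Gamma^{11}\succ0$) and concave in $L_t^2$ (since $\Gamma^{22}\prec0$), so its saddle point is characterized by the first-order conditions. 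Expanding the trace gives the diagonal contributions $\operatorname{tr}(\Gamma^{ii}[(I-L^iC^i)\Sigma^{ii-}(I-L^iC^i)^\top + L^iV^iL^{i\top}])$ and the cross term $2\operatorname{tr}(\Gamma^{12}(I-L^2C^2)\Sigma^{12-\top}(I-L^1C^1)^\top)$. Setting the derivative in $L^1$ to zero gives
\[
\Gamma^{11}\bigl(L^1(C^1\Sigma^{11-}C^{1\top}+V^1) - \Sigma^{11-}C^{1\top}\bigr) = \Gamma^{12}(I-L^2C^2)\Sigma^{12-\top}C^{1\top},
\]
and the condition in $L^2$ is symmetric. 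Premultiplying by $(\Gamma^{11})^{-1}$ and the analogous inverse produces equations of the form \eqref{optimalgains}. Sign conventions in the cross terms need care: the stated formula has $+L_t^2C_t^2$, so the sign of $\Gamma^{12}$ or the definition of the cross term must be matched, and the Kalman factor $(C\Sigma C^\top+V)^{-1}$ must be distributed consistently. These equations are linear in $(L^1,L^2)$ jointly, hence "coupled linear equations."

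\textbf{One-step form and main obstacle.} Substituting the a posteriori update into the next a priori update gives
\[
z_{t+1}^{1}=(A_t+B_t^2K_t^2)\bigl(z_t^1+L_t^1(y_t^1-C_t^1z_t^1)\bigr)+B_t^1u_t^1,
\]
which is \eqref{onestepfilters} with the parameters \eqref{onestepparams}. The main obstacle is justifying that stagewise minimization and maximization of $\operatorname{tr}(\Gamma_t\Sigma_t^+)$ optimizes the whole sum \eqref{estimationerror}. Unlike the Kalman case, $\Sigma_t^+$ does not enter future costs monotonically, because $\Gamma$ is indefinite and $\bar A_t$ couples the errors. I would first argue this under Assumption~\ref{asmp:belief_projection} by interpreting the gains as per-step equilibrium (myopic APBE-consistent) filters. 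Failing a global argument, I would state optimality in this per-stage sense.
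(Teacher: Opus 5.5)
Your proposal follows the paper's proof essentially step for step. The shared steps are: the belief-projection choice $A_t^1=A_t+B_t^2K_t^2$, $\bar B_t^i=B_t^i$; unbiasedness by induction; the a priori/a posteriori covariance recursions; stationarity of $\mathbf{tr}(\Gamma_t\Sigma_t^+)$ in $(L_t^1,L_t^2)$; and recombination into \eqref{onestepfilters}. Your first-order condition is exactly the paper's derivative line: its term $\Gamma_t^{12}(L_t^2C_t^2\Sigma_t^{12-\top}C_t^{1\top}-\Sigma_t^{12-\top}C_t^{1\top})$ is your right-hand side moved across. Your convex--concave remark is a useful addition.

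Where you hedge, you should commit, because the discrepancies are in the statement, not in your computation.
\begin{itemize}
\item Solving your equation gives $L_t^1=\bigl[\Sigma_t^{11-}C_t^{1\top}+(\Gamma_t^{11})^{-1}\Gamma_t^{12}(I-L_t^2C_t^2)\Sigma_t^{12-\top}C_t^{1\top}\bigr](C_t^1\Sigma_t^{11-}C_t^{1\top}+V_t^1)^{-1}$, and symmetrically for $L_t^2$. The displayed \eqref{optimalgains} flips the sign of the $L_t^2C_t^2$ term and drops the right factor on the $\Gamma_t^{12}$ term. The relative sign is fixed by $(I-L_t^2C_t^2)$, so no convention for $\Gamma_t^{12}$ or for the cross term repairs this.
\item The same $w_t$ enters both a priori errors, so the noise term in $\Sigma_{t+1}^-$ is $\begin{bmatrix} W_t & W_t\\ W_t & W_t\end{bmatrix}$, not the block-diagonal $\bar W_t$.
\item Since $e_0^{1+}=e_0^{2+}=x_0-\bar x_0$, $\Sigma_0^+$ has off-diagonal blocks $X_0$. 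This coincidence of initial errors is also the precise reason $\bar A_0$ carries no $K$-terms.
\end{itemize}
The paper's proof simply asserts the block-diagonal forms.

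The gap you flag at the end is genuine, and the paper does not close it either. Its reason for reducing to stagewise $\mathbf{tr}(\Gamma_t\Sigma_t^+)$ is that the a priori step contains no gains. That overlooks the fact that $L_t$ affects every later covariance through $\bar A_t$ and the later updates.

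With all other gains fixed, the tail $\sum_{\tau\ge t}\mathbf{tr}(\Gamma_\tau\Sigma_\tau^+)$ equals $\mathbf{tr}(\Lambda_t\Sigma_t^+)+c_t$. Here $\Lambda_T=\Gamma_T$, $\Lambda_t=\Gamma_t+\bar A_t^\top D_{t+1}^\top\Lambda_{t+1}D_{t+1}\bar A_t$, $D_t=\mathrm{diag}(I-L_t^1C_t^1,\,I-L_t^2C_t^2)$, and $c_t$ does not depend on $L_t$. Hence a whole-horizon saddle point must satisfy your first-order conditions with $\Gamma_t$ replaced by $\Lambda_t$. That is a backward recursion coupled to the forward one.

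These conditions generally differ from the myopic ones. For instance, if $\Gamma_\tau^{12}=0$ for all $\tau$, the myopic gains reduce to $\Sigma_t^{ii-}C_t^{i\top}(C_t^i\Sigma_t^{ii-}C_t^{i\top}+V_t^i)^{-1}$. By contrast, $\Lambda_t^{12}$ is generically nonzero as soon as $K_t^1$ or $K_t^2$ is nonzero, because $\bar A_t$ mixes the blocks.

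So you have two options. You can prove the proposition with $\Lambda_t$ in place of $\Gamma_t$, checking that $\Lambda_t^{11}\succ0\succ\Lambda_t^{22}$. Or you can state it, as you propose, as per-stage optimality of $\mathbf{tr}(\Gamma_t\Sigma_t^+)$, which is all the paper's argument actually establishes.
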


\begin{proof}
The estimation error dynamics for player 1 are
\begin{equation}
\begin{aligned}
    e_{t+1}^1 &= x_{t+1} - z_{t+1}^1 \\
    & = A_t x_t + B_t^1 u_t^1 + B_t^2 u_t^2 + w_t - \left(A_t^1 z_t^1 + \bar B_t^1 u_t^1 + \bar L_t^1 \left(y_t^1 - C_t^1 z_t^1\right)\right) \\
    &= \left(A_t - A_t^1\right) x_t + \left(B_t^1 - \bar B_t^1\right) u_t^1 + B_t^2 u_t^2 + \left(A_t^1 - \bar L_t^1 C_t^1\right) e_t^1 + w_t - \bar L_t^1 v_t^1.
\end{aligned}
\end{equation}
Similarly, the estimation error dynamics for player 2 are
\begin{equation}
\begin{aligned}
    e_{t+1}^2 &= x_{t+1} - z_{t+1}^2 \\
    &= \left(A_t - A_t^2\right) x_t + \left(B_t^2 - \bar B_t^2\right) u_t^2 + B_1^1 u_t^1 + \left(A_t^2 - \bar L_t^2 C_t^2\right) e_t^2 + w_t - \bar L_t^2 v_t^1.
\end{aligned}
\end{equation}
Now using $A_t^1 = A_t + B_t^2 K_t^2$, $\bar B_t^1 = B_t^1$, $u_t^2 = K_t^2 z_t^2$, $A_t^2 = A_t + B_t^1 K_t^1$, $\bar B_t^2 = B_t^2$, and $u_t^1 = K_t^1 z_t^1$, we have
\begin{equation} \label{errors}
\begin{aligned}
    e_{t+1}^1 &= \left(A_t^1 - \bar L_t^1 C_t^1\right) e_t^1 - B_t^2 K_t^2 e_t^2 + w_t - \bar L_t^1 v_t^1, \\
    e_{t+1}^2 &= \left(A_t^2 - \bar L_t^2 C_t^2\right) e_t^2 - B_t^1 K_t^1 e_t^1 + w_t - \bar L_t^2 v_t^2.
\end{aligned}
\end{equation}
Taking  $\bar B_t^1 = B_t^1$ and  $\bar B_t^2 = B_t^2$ makes each player's estimation error independent of their own input. Taking $A_t^1 = A_t + B_t^2 K_t^2$ and $A_t^2 = A_t + B_t^1 K_t^1$ means
that each player uses their own estimate to approximate the unknown input signal of the opposing player (player $1$ approximates $K_t^2 z_t^2$ with $K_t^2 z_t^1$, and player $2$ approximates $K_t^1 z_t^1$ with $K_t^1 z_t^2$) according to Assumption \ref{asmp:belief_projection}. This couples the estimation error dynamics of each player and makes them dependent on the control strategies. Clearly, \eqref{errors} shows that $\mathbf{E} e_t^1 = \mathbf{E} e_t^2 = 0$ $\forall t$ (i.e., the estimates are unbiased), because the initialization \eqref{initialize} makes the initial estimates unbiased, and because the process and measurement noises are zero mean.

To derive the dynamics for the joint covariance and select equilibrium filter gain matrices, it is convenient to split the updates into a priori, which uses the (common knowledge) model, and a posteriori, which uses the (private) measurements. Defining $e_t^{i-} = x_t - z_t^{i-}$ and $e_t^{i-} = x_t - z_t^{i-}$ for $i=1,2$ and using the first two equations of \eqref{apriori}, we have
\begin{equation} \label{apriorierrors}
\begin{aligned}
    e_{t+1}^{1-} &= \left(A_t + B_t^2 K_t^2\right) e_t^{1+} - B_t^2 K_t^2 e_t^{2+} + w_t, \\
    e_{t+1}^{2-} &= \left(A_t + B_t^1 K_t^1\right) e_t^{2+} - B_t^1 K_t^1 e_t^{1+} + w_t.
\end{aligned}
\end{equation}
Substituting the above into $\Sigma_{t+1}^- = \mathbf{E}\begin{bmatrix} e_{t+1}^{1-} \\ e_{t+1}^{2-}  \end{bmatrix} \begin{bmatrix} e_{t+1}^{1-} \\ e_{t+1}^{2-} \end{bmatrix}^\top$ leads directly to the a priori covariance update in the third equation of  \eqref{apriori}. Note that the update at time $0$ is independent of the inputs at time $0$, since the initializations are common knowledge. However, after each player receives their first private measurement, the error dynamics become coupled, as noted above.

Defining the a posteriori errors $e_t^{i+} = x_t - z_t^{i+}$ and $e_t^{i-} = x_t - z_t^{i+}$ for $i=1,2$ and using the first two equations of \eqref{aposteriori}
\begin{equation} \label{apriorierrors}
\begin{aligned}
    e_{t}^{1+} &= \left(I - L_t^1 C_t^1\right) e_t^{1-} - L_t^1 v_t^1, \\
    e_{t}^{2+} &= \left(I - L_t^2 C_t^2\right) e_t^{2-} - L_t^2 v_t^2.
\end{aligned}
\end{equation}
Substituting into $\Sigma_{t}^+ = \mathbf{E}\begin{bmatrix} e_{t}^{1+} \\ e_{t}^{2+}  \end{bmatrix} \begin{bmatrix} e_{t}^{1+} \\ e_{t}^{2+} \end{bmatrix}^\top$ leads directly to the a posteriori covariance update in the third equation of \eqref{aposteriori}.

Now we select the filter gain matrices to optimize the weighted error covariance. Since the a priori update is independent of the filter gain matrices, it is equivalent to select them to optimize the expected total a posteriori estimation error. In particular, at time $t$ the filter gains can be selected to optimize
\begin{equation} \label{eq:cost_filter_gain}
\begin{aligned}
\mathcal{J}(L_t^1, L_t^2) = \mathbf{E} \begin{bmatrix}
     e_t^{1+} \\ e_t^{2+} 
 \end{bmatrix}^\top
 \begin{bmatrix}
     \Gamma_t^{11} & \Gamma_t^{12}  \\
      \Gamma_t^{12\top} & \Gamma_t^{22} 
 \end{bmatrix}
  \begin{bmatrix}
     e_t^{1+} \\ e_t^{2+}
 \end{bmatrix} = \mathbf{trace}\left( \Gamma_t \Sigma_t^+ \right) .
\end{aligned}
\end{equation}
Expanding, taking the derivative with respect to $L_t^1$ and $L_t^2$, and setting to zero gives
\begin{eqnarray}
\begin{aligned}
\frac {\partial \mathcal{J}}{\partial L_t^1} &=  \Gamma_t^{11} \left( L_t^1  \left( C_t^{1} \Sigma_{t}^{11-} C_t^{1T} + V_t^1 \right) -  \Sigma_t^{11-} C_t^{1\top} \right) + \Gamma_t^{12} \left(L_t^2 C_t^2 \Sigma_t^{12-\top} C_t^{1\top} - \Sigma_t^{12-\top} C_t^{1\top}\right) = 0, \\
\frac {\partial \mathcal{J}}{\partial L_t^2} &=  \Gamma_t^{22} \left( L_t^2  \left( C_t^{2} \Sigma_{t}^{22-} C_t^{2T} + V_t^2 \right) -  \Sigma_t^{22-} C_t^{2\top} \right) + \Gamma_t^{12\top} \left(L_t^1 C_t^1 \Sigma_t^{12-} C_t^{2\top} - \Sigma_t^{12-} C_t^{2\top}\right) = 0  .
\end{aligned}
\end{eqnarray}
Rearranging the forms above yields the coupled equations \eqref{optimalgains}, which can be solved jointly for $L_t^1$ and $L_t^2$ to get the filter gains explicitly in terms of the (a priori) joint covariance, weight matrix, and model parameters. Finally, it is straightforward to combine the a priori and a posteriori updates to form a single update of the form \eqref{onestepfilters}, with the parameters specified in \eqref{onestepparams}.
\end{proof}

\begin{discussion}

Each player can independently propagate the joint error covariance forward using the (common knowledge) model information and compute both filter gain matrices. However, the state estimates for each player are computed separately based on their private information  (measurement and control input histories). 

The joint error covariance trajectory $\boldsymbol{\Sigma} = [\Sigma_0^+, \Sigma_1^-, \ldots, \Sigma_T^-]$ and filter parameters $\mathbf{F} = \{(A_t^1, \bar B_t^1, \bar L_t^1), (A_t^2, \bar B_t^2, \bar L_t^2)\}_{t=1}^{T-1}$ can be computed with the forward recursion specified in Proposition \ref{prop1:partialinfofilters} based on the common knowledge model parameters $\mathcal{M}$, the gain matrices $\mathbf{K} = [K_0^1, K_0^2, K_1^1, K_1^2, \ldots, K_{T-1}^1, K_{T-1}^2]$ and weight matrices $\boldsymbol{\Gamma} = [\Gamma_0, \Gamma_1, \ldots, \Gamma_T]$. We express this forward recursion compactly as $$\left(\boldsymbol{\Sigma}, \mathbf{F}\right) = \mathtt{Forward}\left(\mathcal{M}, \mathbf{K}, \boldsymbol{\Gamma}\right).$$ In the standard Kalman filter, the forward recursion does not depend on the control gain or filter weight matrix.
\end{discussion}

\subsection{Backward Belief-State Feedback Strategies with Partial and Asymmetric Information} \label{partialinfoDP}
We now consider the dynamic game to characterize the equilibrium feedback strategies when the filter parameters for propagating belief states are fixed.  Solving the control problem involves a backward dynamic programming recursion to propagate expected costs and compute equilibrium strategies. 
We assume that each player uses a filter of the form \eqref{onestepfilters},
but with the parameters $(A_t^1, \bar B_t^1, \bar L_t^1)$ and $(A_t^2, \bar B_t^2, \bar L_t^2)$ for $t=0,\ldots, T-1$ fixed to arbitrary values and without the inputs specified yet. The joint state and estimation error dynamics are then given by
\begin{equation}
\begin{aligned}
    &\begin{bmatrix}
     x_{t+1} \\ e_{t+1}^1 \\ e_{t+1}^2 
    \end{bmatrix} = 
    \begin{bmatrix}
     A_t & 0 & 0 \\ A_t - A_t^1 & A_t^1 - L_t^1 C_t^1 &0 \\ A_t - A_t^2 & 0 & A_t^2 - L_t^2 C_t^2
    \end{bmatrix} 
    \begin{bmatrix}
     x_{t} \\ e_{t}^1 \\ e_{t}^2 
    \end{bmatrix} + 
    \begin{bmatrix}
     B_t^1 & B_t^2  \\ B_t^1 - \bar B_t^1 & B_t^2 \\ B_t^1 & B_t^2 - \bar B_t^2
    \end{bmatrix} 
    \begin{bmatrix}
     u_{t}^1 \\ u_{t}^2 
    \end{bmatrix} + 
    \begin{bmatrix}
     I & 0 & 0 \\ I &  -L_t^1  & 0 \\ I & 0 & - L_t^2
    \end{bmatrix} \begin{bmatrix} w_{t} \\ v_{t}^1 \\ v_{t}^2  \end{bmatrix}.  \\
\end{aligned}
\end{equation}
Denoting the augmented state $\mathbf{{X}}_t = \begin{bmatrix} x_{t} \\ e_{t}^1 \\ e_{t}^2  \end{bmatrix}\in \mathbf{R}^{3n}$ and augmented disturbance $\mathbf{w}_t = \begin{bmatrix} w_{t} \\ v_{t}^1 \\ v_{t}^2  \end{bmatrix} \in \mathbf{R}^{n+p_1 + p_2}$ with covariance matrix $\mathbf{W}_t =  \begin{bmatrix}
     W_t & 0 & 0 \\ 0 &  V_t^1  & 0 \\ 0 & 0 & V_t^2
    \end{bmatrix}$, these dynamics can be expressed compactly as
\begin{equation}
\begin{aligned}
    \mathbf{X}_{t+1} &= \mathbf{A}_t \mathbf{X}_t +  \mathbf{B}_t^1 u_t^1 +  \mathbf{B}_t^2  u_t^2  + \mathbf{G}_t \mathbf{w}_t \\
    &= \mathbf{f}_t \left( \mathbf{X}_t , u_t^1, u_t^2, \mathbf{w}_t \right).
\end{aligned}
\end{equation}
The stage cost functions can be expressed in terms of the augmented state $\mathbf{X}_t$ as
\begin{equation} \label{stagecostaugmented}
\begin{aligned}
    \ell_t \left(\mathbf{X}_t, u_t^1, u_t^2\right) &= \begin{bmatrix} \mathbf{X}_{t} \\ u_{t}^1 \\ u_{t}^2  \end{bmatrix}^\top
    \begin{bmatrix}
      \mathbf{Q}_t & 0 & 0 \\ 0 &  R_t  & 0 \\ 0 & 0 & S_t
    \end{bmatrix} \begin{bmatrix} \mathbf{X}_{t} \\ u_{t}^1 \\ u_{t}^2  \end{bmatrix}, \\
    \ell_T \left(\mathbf{X}_T\right) &= \mathbf{X}_T^\top \mathbf{Q}_T \mathbf{X}_T,
\end{aligned}
\end{equation}
where $\mathbf{Q}_t = \begin{bmatrix}
      Q_t & 0 & 0 \\ 0 &  0  & 0 \\ 0 & 0 & 0
\end{bmatrix}$.

Our goal in this section is to find the feedback gains $\{K_t^{1*}, K_t^{2*}\}_{t=0}^{T-1}$ of a saddle-point equilibrium with strategies $u_t^{1*} = K_t^{1*} z_t^1$ and $u_t^{2*} = K_t^{2*} z_t^2$ for the following objective. To achieve this goal, we first establish a general dynamic programming framework for computing feedback strategies of the form $u_t^1 = \pi_t(z_t^1)$ and $u_t^2 = \mu_t(z_t^2)$ for $t=0,1\ldots,T-1$, under Assumptions \ref{asmp:common_knowledge} and \ref{asmp:belief_projection}. We then specialize this framework to the linear-quadratic setting to derive explicit expressions for the gains $K_t^{1*}$ and $K_t^{2*}$.

Since the control strategies of each player depend on their state estimates, the total equilibrium cost can be expressed as a function of the initial state and initial estimation errors (using $e_t^i = x_t - z_t^i$). We define the total equilibrium cost using a saddle-point equilibrium strategy pair ($\pi^{*}$, $\mu^{*}$) with $\pi^{*} = (\pi_0^{*}, \pi_1^{*}, \ldots, \pi_{T-1}^{*})$ and $\mu^{*} = (\mu_0^{*}, \mu_1^{*}, \ldots, \mu_{T-1}^{*})$ for a given initial state and initial estimation errors $\mathbf{X}_0 = (x_0, e_0^1, e_0^2)$ by
\begin{equation} \label{totalcostLQR-initial}
    J_0^*(\mathbf{X}_0) = \mathbf{E}_{w_t, v_t^1, v_t^2} \left[ \sum_{t=0}^{T-1} \ell_t \left(\mathbf{X}_t, \pi_t^{*}\left( z_t^1\right), \mu_t^{*} \left( z_t^2 \right) \right) +  \ell_T \left(\mathbf{X}_T\right) \right].
\end{equation}

In analogy to the Principle of Optimality in optimal control theory, we state a ``Principle of Subgame Equilibrium'' for dynamic games. This is related to the notion of \emph{subgame perfect Nash equilibria} developed by Reinhard Selten \cite{selten1965spieltheoretische} and the notion of \emph{time consistency} \cite{bacsar1998dynamic}.

\begin{lemma}[Principle of Subgame Equilibrium in Dynamic Games] \label{subgameprinciple}
Suppose ($\pi^{*}, \mu^{*}$) is a saddle-point equilibrium strategy pair for the dynamic game. Consider the subgame at time $t$ in augmented state $\mathbf{X}_t$
\begin{equation} \label{subgame}
\min_{\pi^t}  \max_{\mu^t} \mathbf{E}_{w_t, v_t^1, v_t^2} \left[ \sum_{\tau=t}^{T-1} \ell_\tau \left(\mathbf{X}_\tau, \pi_\tau\left(z_\tau^1\right), \mu_\tau\left(z_\tau^2\right) \right) +  \ell_T \left( \mathbf{X}_T \right) \right],
\end{equation}
where the optimizations are over tail strategies $\pi^t = (\pi_t, \pi_{t+1}, \ldots, \pi_{T-1})$ and $\mu^t = (\mu_t, \mu_{t+1}, \ldots, \mu_{T-1})$ of the state estimates. Then the tail part $(\pi^{t*}, \mu^{t*})$ of the equilibrium $(\pi^*, \mu^*)$ is an equilibrium strategy for the subgame \eqref{subgame}.
\end{lemma}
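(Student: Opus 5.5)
The plan is the classical contradiction argument behind Bellman's principle of optimality and Selten's subgame perfection, adapted to strategies that are functions of the estimates $z_\tau^i = x_\tau - e_\tau^i$. Fix $t$ and an augmented state $\mathbf{X}_t$. Write $J_t(\mathbf{X}_t;\pi^t,\mu^t)$ for the subgame objective in \eqref{subgame}. We must show the saddle inequalities
\[
J_t(\mathbf{X}_t;\pi^{t*},\mu^t)\le J_t(\mathbf{X}_t;\pi^{t*},\mu^{t*})\le J_t(\mathbf{X}_t;\pi^t,\mu^{t*})
\]
for all tail strategies $\pi^t,\mu^t$. The two inequalities are symmetric, so I treat only the minimizer's. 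Suppose, to the contrary, that some tail strategy $\hat\pi^t$ gives $J_t(\bar{\mathbf{X}}_t;\hat\pi^t,\mu^{t*})<J_t(\bar{\mathbf{X}}_t;\pi^{t*},\mu^{t*})$ at a particular $\bar{\mathbf{X}}_t$.

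\textbf{Step 1 (Markov decomposition).} The augmented dynamics $\mathbf{X}_{\tau+1}=\mathbf{f}_\tau(\mathbf{X}_\tau,u_\tau^1,u_\tau^2,\mathbf{w}_\tau)$ are driven by the i.i.d.\ noise $\mathbf{w}_\tau$. The inputs $u_\tau^i$ are functions of $z_\tau^i$, which are read off from $\mathbf{X}_\tau$. Hence, conditioned on $\mathbf{X}_t$, the future trajectory and the tail cost depend only on $\mathbf{X}_t$ and the tail strategies, not on the earlier strategy components or the earlier history. By the tower property, for any spliced strategy $\tilde\pi=(\pi_0^*,\dots,\pi_{t-1}^*,\pi^t)$ we get
\[
J(\tilde\pi,\mu^*)=\mathbf{E}\Big[\textstyle\sum_{\tau<t}\ell_\tau\Big]+\mathbf{E}_{\mathbf{X}_t\sim P_t}\big[J_t(\mathbf{X}_t;\pi^t,\mu^{t*})\big].
\]
Here $P_t$ is the law of $\mathbf{X}_t$ under $(\pi^*,\mu^*)$ up to time $t-1$. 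That law is unaffected by the tail choice, so the first term is identical for $\tilde\pi$ and $\pi^*$.

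\textbf{Step 2 (localize the deviation).} The deviation $\hat\pi^t$ cannot condition on $\mathbf{X}_t$, since a strategy only sees $z^1$. Nevertheless, $\mathbf{X}_t\mapsto J_t(\mathbf{X}_t;\pi^t,\mu^{t*})$ is continuous, and in the LQG setting it is a quadratic function of $\mathbf{X}_t$ plus a constant. Therefore the strict improvement at $\bar{\mathbf{X}}_t$ persists on an open neighborhood $U$ of $\bar{\mathbf{X}}_t$. I would then build a spliced tail strategy that plays $\hat\pi^t$ when the current estimate $z_t^1$ lies in the projection of $U$, and $\pi^{t*}$ otherwise. I would refine $U$ to a product-type set if needed so that this switch is measurable in $z_t^1$ alone. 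After this, one must check that the gain on $U$ is not cancelled by losses on the part of the $z_t^1$-slice lying outside $U$. The cleanest route is to shrink $U$ and invoke continuity again.

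\textbf{Step 3 (conclude).} Under the Gaussian model with $W_\tau,V_\tau^i\succ0$, the law $P_t$ of $\mathbf{X}_t$ has positive density on the relevant reachable set, so $P_t(U)>0$. Plugging the spliced strategy into the decomposition of Step 1 gives $J(\tilde\pi,\mu^*)<J(\pi^*,\mu^*)$. This contradicts the right-hand saddle inequality of $(\pi^*,\mu^*)$. The maximizer's inequality follows by the mirror argument.

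\textbf{Main obstacle.} The delicate point is the passage from a single $\mathbf{X}_t$ to a set of positive $P_t$-measure that a $z^1$-measurable strategy can target (Steps 2--3). I would first handle it by continuity plus full support. If full support fails in degenerate cases, for example $X_0=0$ or singular noise, I would state the conclusion for $P_t$-almost every reachable $\mathbf{X}_t$. This almost-everywhere version is the standard form of subgame perfection under stochastic dynamics.
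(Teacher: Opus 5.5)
\textbf{There is a genuine gap in Step~2, and it cannot be closed.} Your overall route (splice a profitable tail deviation into the full game and contradict the saddle inequality) is the paper's one-line argument, and your Step~1 decomposition is correct. The trouble is that the statement, read pointwise in $\mathbf{X}_t$ as you set it up, is false. So is your almost-everywhere fallback.

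\textbf{Counterexample.} Take $t=T-1$ with $B_{T-1}^2=0$, so $u^2=0$ is dominant for player~2. In the subgame at a fixed $\mathbf{X}_{T-1}=(x,e^1,e^2)$, the cost in $u^1$ is $(u^1-Kx)^\top H(u^1-Kx)+\mathrm{const}$, where $H=R_{T-1}+B_{T-1}^{1\top}Q_TB_{T-1}^1\succ 0$ and $K=-H^{-1}B_{T-1}^{1\top}Q_TA_{T-1}$. Because $e^1$ is a fixed vector in that subgame, $\hat\pi_{T-1}(z):=K(z+e^1)$ is an admissible function of $z^1$ that attains the minimum. The equilibrium action $Kz^1$ produced by Proposition~\ref{DPalgorithmLQ} is worse by $(Ke^1)^\top H(Ke^1)>0$ whenever $Ke^1\neq 0$.

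More generally, whatever $\pi_{T-1}^*$ is, it depends on $z^1$ alone. It therefore cannot equal $Kx$ for $P_{T-1}$-a.e.\ $\mathbf{X}_{T-1}$ unless $Kx$ is a.s.\ determined by $z^1$. The failure set has positive measure whenever $K\,\mathrm{Cov}(x_{T-1}\mid z_{T-1}^1)K^\top\neq 0$.

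\textbf{Why your localization fails.} A switch keyed to $z_t^1$ acts on the whole slab $\{x_t-e_t^1\in\mathrm{proj}(U)\}$. That slab stays unbounded in the other $2n$ directions however small $U$ is. So the share of the slab's mass lying in $U$ shrinks to zero along with $U$, and continuity near $\bar{\mathbf{X}}_t$ gives no control over losses elsewhere on the slab. The switch is also outside the strategy class. For $\tau>t$ a strategy may depend only on $z_\tau^1$, so it cannot remember whether $z_t^1$ fell in $\mathrm{proj}(U)$.

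\textbf{What does hold.} The true statement is the averaged one. The paper's brief argument, and the conditioning on $z_t^i$ in Lemma~\ref{DPalgorithm}, implicitly rely on this reading. It says that $(\pi^{t*},\mu^{t*})$ is a saddle point of
$(\pi^t,\mu^t)\mapsto\mathbf{E}_{\mathbf{X}_t\sim P_t}\bigl[J_t(\mathbf{X}_t;\pi^t,\mu^t)\bigr]$, with $P_t$ the law of $\mathbf{X}_t$ under equilibrium play before time $t$.

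For that version your Step~1 alone finishes the proof. Suppose $\hat\pi^t$ lowers this averaged cost. Then $\tilde\pi=(\pi_0^*,\ldots,\pi_{t-1}^*,\hat\pi^t)$ is admissible, since each component is still a function of the current estimate. Your decomposition gives $J(\tilde\pi,\mu^*)<J(\pi^*,\mu^*)$, contradicting the saddle inequality. The maximizer's case is symmetric.

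So drop Steps~2--3 and restate the lemma in the averaged form.
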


\begin{proof}
Suppose the tail part ($\pi^{t*}, \mu^{t*}$) of the equilibrium ($\pi^{*}, \mu^{*}$) were not an equilibrium strategy for the subgame \eqref{subgame}. Then at least one of the players could improve their cost by modifying their strategy for the subgame. 
This contradicts that the tail strategies form part an equilibrium for the full time horizon, since one player could improve their cost by modifying their tail strategy.
\end{proof}

Lemma \ref{subgameprinciple} provides the foundation for backward induction. We now formalize how this principle translates into a computational dynamic programming algorithm for computing state estimate equilibrium strategies under Assumptions \ref{asmp:common_knowledge} and \ref{asmp:belief_projection}.

\begin{lemma}[Dynamic Programming for Equilibrium Strategies] \label{DPalgorithm}
Under Assumptions \ref{asmp:common_knowledge} and \ref{asmp:belief_projection}, the equilibrium cost function $J_0^*(\mathbf{X}_0)$ is given by the last step of the following backwards dynamic programming recursion
\begin{itemize}
\item \textbf{Initialize:}
\begin{equation} \label{initializeDP}
J_T(\mathbf{X}_T) = \ell_T \left(\mathbf{X}_T\right).
\end{equation}

\item \textbf{Dynamic Programming:} For $t=T-1,\ldots, 0$, the cost functions are updated according to
\begin{equation} \label{DPrecursion}
J_t \left(\mathbf{X}_t\right) = \mathbf{E}_{\mathbf{w}_t} \left[ \ell_t \left( \mathbf{X}_t, u_{t}^{1*} , u_{t}^{2*} \right)  + J_{t+1}\left( \mathbf{f}_t \left( \mathbf{X}_t , u_{t}^{1*}, u_{t}^{2*}, \mathbf{w}_t \right) \right) \right],
\end{equation}
where the equilibrium policies are given by
\begin{equation}
\begin{aligned}
    u_t^{1*} &= \pi_t^*\left( z_t^1 \right) = \arg \min_{u_t^1} \mathbf{E} \left[ \mathcal{Q}_t\left(\mathbf{X}_t, u_t^1, u_t^{2*}\right) \mid z_t^1 \right], \\
    u_t^{2*} &= \mu_t^*\left( z_t^2 \right) = \arg \max_{u_t^2} \mathbf{E} \left[ \mathcal{Q}_t\left(\mathbf{X}_t, u_t^{1*}, u_t^2\right) \mid z_t^2 \right],
\end{aligned}
\end{equation}
where
\begin{equation}
\mathcal{Q}_t\left( \mathbf{X}_t, u_t^1, u_t^2\right) = \mathbf{E}_{\mathbf{w}_t} \left[ \ell_t \left(\mathbf{X}_t, u_t^1, u_t^2 \right)  + J_{t+1}\left(\mathbf{f}_t\left( \mathbf{X}_t , u_t^1, u_t^2, \mathbf{w}_t \right)\right) \right].  
\end{equation}
\end{itemize}
\end{lemma}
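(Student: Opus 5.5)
The plan is a backward induction on $t$, resting on Lemma \ref{subgameprinciple}. Define $J_t^*(\mathbf{X}_t)$ as the equilibrium value of the subgame \eqref{subgame} started at time $t$ in augmented state $\mathbf{X}_t$, with the strategies $\pi_\tau(z_\tau^1)$ and $\mu_\tau(z_\tau^2)$ restricted to the state-estimate form. The claim to establish is $J_t^* = J_t$ for every $t$, where $J_t$ is produced by \eqref{initializeDP}--\eqref{DPrecursion}. Taking $t=0$ then gives the statement.

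The base case $t=T$ is immediate, since no decisions remain and $J_T^*(\mathbf{X}_T) = \ell_T(\mathbf{X}_T)$. For the inductive step, assume $J_{t+1}^* = J_{t+1}$. By Lemma \ref{subgameprinciple}, the tail $(\pi^{(t+1)*},\mu^{(t+1)*})$ of any equilibrium of the subgame at $t$ is itself an equilibrium of the subgame at $t+1$. Hence, once $(u_t^1,u_t^2)$ is fixed, the remaining cost is $J_{t+1}(\mathbf{f}_t(\mathbf{X}_t,u_t^1,u_t^2,\mathbf{w}_t))$. This uses the Markov property of the augmented dynamics: $\mathbf{X}_{t+1}$ depends only on $\mathbf{X}_t$, the current inputs and $\mathbf{w}_t$, and $\mathbf{w}_t$ is independent of the past. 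Because $z_t^i = x_t - e_t^i$ is a function of $\mathbf{X}_t$, the tail strategies are well defined on the successor state.

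The subgame at $t$ therefore reduces to a one-stage game with payoff $\mathcal{Q}_t(\mathbf{X}_t,u_t^1,u_t^2)$. The informational constraint is that $u_t^1$ may depend only on $z_t^1$ and $u_t^2$ only on $z_t^2$. For a fixed opponent rule $u_t^{2*}=\mu_t^*(z_t^2)$, player 1's expected payoff is $\mathbf{E}[\mathcal{Q}_t(\mathbf{X}_t,\pi_t(z_t^1),\mu_t^*(z_t^2))]$. By the tower property this equals $\mathbf{E}\big[\mathbf{E}[\mathcal{Q}_t \mid z_t^1]\big]$, so it is minimized by minimizing the conditional expectation pointwise in $z_t^1$, which is exactly the stated $\arg\min$. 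The same argument applied to player 2 gives the $\arg\max$ conditioned on $z_t^2$.

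The mutual best responses $(u_t^{1*},u_t^{2*})$ form a Bayesian-Nash equilibrium of this stage game. Substituting them back gives the equilibrium stage value \eqref{DPrecursion}, which is $J_t^* = J_t$ and closes the induction.

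The delicate step is computing the conditional expectations $\mathbf{E}[\,\cdot \mid z_t^i]$. They require player $i$'s beliefs about $\mathbf{X}_t$ and, through $u_t^{j*}=\mu_t^*(z_t^j)$, about $z_t^j$. This is precisely where the regress of higher-order beliefs would enter. Assumption \ref{asmp:belief_projection} closes it: player $i$ evaluates the opponent's input by substituting $\mathbf{E}[z_t^j\mid z_t^i]=z_t^i$, and Assumption \ref{asmp:common_knowledge} makes $\mathcal{Q}_t$ and the opponent's rule known to both players. Under these assumptions each conditional problem is well posed and depends only on $z_t^i$, so the pointwise optimizers are measurable functions of $z_t^i$ and are admissible strategies. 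I would state explicitly that the equilibrium is understood relative to these belief conventions, i.e.\ as an APBE. I would also note that existence of the saddle point at each stage is assumed here, and is verified later in the LQ specialization, where $R_t\succ0$ and $S_t\prec0$ make the stage problem convex--concave.
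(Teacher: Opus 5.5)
Your proposal is correct and takes essentially the same route as the paper. Both argue by backward induction on the equilibrium cost-to-go $J_t^*$, use Lemma~\ref{subgameprinciple} and the induction hypothesis to replace the tail by $J_{t+1}(\mathbf{f}_t(\mathbf{X}_t,u_t^1,u_t^2,\mathbf{w}_t))$, and then reduce the optimization over strategies to pointwise optimization of $\mathbf{E}[\mathcal{Q}_t \mid z_t^i]$. Your tower-property argument and your explicit use of Assumption~\ref{asmp:belief_projection} to make those conditional problems well posed spell out what the paper compresses into step \eqref{DPproof-e}.
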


\begin{proof}
For $t = 0,\ldots,T$, we define the equilibrium cost-to-go functions for the subgame using tail strategy pairs $\pi^{t} = (\pi_t, \pi_{t+1}, \ldots, \pi_{T-1})$ and $\mu^{t} = (\mu_t, \mu_{t+1}, \ldots, \mu_{T-1})$ starting from $\mathbf{X}_t = \begin{bmatrix} x_t \\ e_t^1 \\ e_t^2 \end{bmatrix}$ by
\begin{equation} \label{totalcostLQR-initial}
    J_t^*\left(\mathbf{X}_t\right) = \min_{\pi^{t}}  \max_{\mu^{t}} \mathbf{E}_{\mathbf{w}_t} \left[ \sum_{\tau=t}^{T-1} \ell_\tau \left(\mathbf{X}_\tau, \pi_\tau\left(z_\tau^1\right), \mu_\tau\left(z_\tau^2\right) \right) +  \ell_T \left(\mathbf{X}_T\right) \right].
\end{equation}
The base case for induction is established by defining $J_T^*(\mathbf{X}_T) := \ell_T (\mathbf{X}_T) =  J_T(\mathbf{X}_T)$.

For the induction hypothesis, suppose for some $t$ that $J_{t+1}(\mathbf{X}_t) = J_{t+1}^*(\mathbf{X}_{t+1})$. Then
\begin{subequations} \label{DPproof}
\begin{align}
J_t^*(\mathbf{X}_t) &= \min_{\pi_t} \max_{\mu_t} \mathbf{E}_{\mathbf{w}_t} \left[  \ell_t \left(\mathbf{X}_t, \pi_t\left(z_t^1\right), \mu_t\left(z_t^2\right) \right) + \min_{\pi^{t+1}} \max_{\mu^{t+1}} \mathbf{E}_{\mathbf{w}_{t+1}} \sum_{\tau=t+1}^{T-1} \ell_\tau \left(\mathbf{X}_\tau, \pi_\tau\left(z_\tau^1\right), \mu_\tau\left(z_\tau^2\right) \right) +  \ell_T \left(\mathbf{X}_T\right) \right]\label{DPproof-a} \\
&= \min_{\pi_t} \max_{\mu_t} \mathbf{E}_{\mathbf{w}_t} \left[ \ell_t \left(\mathbf{X}_t, \pi_t\left(z_t^1\right), \mu_t \left( z_t^2 \right) \right) + J_{t+1}^*\left( \mathbf{X}_{t+1} \right) \right] \label{DPproof-b}\\
&= \min_{\pi_t} \max_{\mu_t} \mathbf{E}_{\mathbf{w}_t} \left[  \ell_t \left( \mathbf{X}_t, \pi_t \left( z_t^1 \right), \mu_t \left( z_t^2 \right) \right) + J_{t+1}^* \left( \mathbf{f}_t \left( \mathbf{X}_t , \pi_t \left( z_t^1 \right), \mu_t \left( z_t^2 \right), \mathbf{w}_t \right) \right) \right]\label{DPproof-c}  \\
&= \min_{\pi_t} \max_{\mu_t} \mathbf{E}_{\mathbf{w}_t} \left[  \ell_t \left( \mathbf{X}_t, \pi_t \left( z_t^1 \right), \mu_t \left( z_t^2 \right) \right) + J_{t+1} \left( \mathbf{f}_t \left( \mathbf{X}_t , \pi_t \left( z_t^1 \right), \mu_t \left( z_t^2 \right), \mathbf{w}_t \right) \right) \right]\label{DPproof-d}  \\ 
&= \mathbf{E}_{\mathbf{w}_t} \left[ \ell_t \left( \mathbf{X}_t, u_{t}^{1*} , u_{t}^{2*} \right)  + J_{t+1}\left( \mathbf{f}_t \left( \mathbf{X}_t , u_{t}^{1*}, u_{t}^{2*}, \mathbf{w}_t \right) \right) \right] \label{DPproof-e} \\
&= J_{t} \left( \mathbf{X}_t \right)\label{DPproof-f},
\end{align}
\end{subequations}
where the equilibrium input strategies jointly satisfy
\begin{equation}
\begin{aligned}
    u_t^{1*} &= \pi_t^*\left( z_t^1 \right) = \arg \min_{u_t^1} \mathbf{E} \left[ \mathcal{Q}_t\left(\mathbf{X}_t, u_t^1, u_t^{2*}\right) \mid z_t^1 \right], \\
    u_t^{2*} &= \mu_t^*\left( z_t^2 \right) = \arg \max_{u_t^2} \mathbf{E} \left[ \mathcal{Q}_t\left(\mathbf{X}_t, u_t^{1*}, u_t^2\right) \mid z_t^2 \right],
\end{aligned}
\end{equation}
and
\begin{align} \label{qfunction}
\mathcal{Q}_t\left( \mathbf{X}_t, u_t^1, u_t^2\right) = \mathbf{E}_{\mathbf{w}_t} \left[ \ell_t \left(\mathbf{X}_t, u_t^1, u_t^2 \right)  + J_{t+1}\left(\mathbf{f}_t\left( \mathbf{X}_t , u_t^1, u_t^2, \mathbf{w}_t \right)\right) \right].  
\end{align}
In \eqref{DPproof-a}, the Principle of Subgame Equilibrium is used to separate optimization of the current strategies at time $t$ from the tail strategies for the subgame from time $t+1$ to the end of the horizon. In \eqref{DPproof-b}, \eqref{DPproof-c}, and \eqref{DPproof-d}, the definition of the equilibrium cost-to-go functions, the dynamics function, and the induction hypothesis are used, respectively. In \eqref{DPproof-e}, the optimization over feedback strategies is converted to pointwise optimizations over the respective inputs over all possible values of the augmented state, conditioned on the state estimates for each player.
\end{proof}

Lemma \ref{DPalgorithm} establishes the general dynamic programming framework for computing state estimate feedback equilibrium strategies and cost functions. We now specialize this framework to the linear-quadratic setting. The following proposition shows that under this setting, the cost-to-go functions maintain a quadratic structure throughout the backward recursion and provide explicit expressions for computing the linear saddle-point equilibrium strategies.

\begin{proposition} \label{DPalgorithmLQ}
Under the linear-quadratic setting, equilibrium linear state estimate feedback strategies and their corresponding quadratic equilibrium cost functions can be computed via the following backward dynamic programming recursion:
\begin{itemize}
\item \textbf{Initialize:}
\begin{equation} \label{initializeDP}
P_T = \mathbf{Q}_T, \quad r_T = 0.
\end{equation}

\item \textbf{Dynamic Programming:} For $t=T-1,\ldots,0$, define the matrix
\begin{equation} \label{DPqfunction}
\begin{aligned}
\mathcal{Q}_t (P_{t+1}) =  \begin{bmatrix}
     \mathcal{Q}_t^{00} & \mathcal{Q}_t^{01} & \mathcal{Q}_t^{02}  \\
      \mathcal{Q}_t^{01\top} & \mathcal{Q}_t^{11} & \mathcal{Q}_t^{12} \\
      \mathcal{Q}_t^{02\top} & \mathcal{Q}_t^{12\top} & \mathcal{Q}_t^{22} 
 \end{bmatrix} = \begin{bmatrix}
     \mathbf{Q}_t + \mathbf{A}_t^\top P_{t+1} \mathbf{A}_t & \mathbf{A}_t^\top P_{t+1} \mathbf{B}_t^1 & \mathbf{A}_t^\top P_{t+1} \mathbf{B}_t^2  \\
      \mathbf{B}_t^{1\top} P_{t+1} \mathbf{A}_t & R_t + \mathbf{B}_t^{1\top} P_{t+1} \mathbf{B}_t^1 & \mathbf{B}_t^{1\top} P_{t+1} \mathbf{B}_t^2 \\
      \mathbf{B}_t^{2\top} P_{t+1} \mathbf{A}_t &  \mathbf{B}_t^{2\top} P_{t+1} \mathbf{B}_t^1 & S_t +  \mathbf{B}_t^{2\top} P_{t+1} \mathbf{B}_t^2 
 \end{bmatrix}.
\end{aligned}
\end{equation}
Suppose $ \mathcal{Q}_t^{11} \succ 0$ and $ \mathcal{Q}_t^{22} \prec 0$, $\forall t$. Then the saddle-point equilibrium strategies $u_t^{1*} = K_t^{1*} z_t^1$ and $u_t^{2*} = K_t^{2*} z_t^2$ have feedback gains given by the left $m_1 \times n$ and $m_2 \times n$ blocks, respectively, of the matrices
\begin{equation} \label{DPgains}
\begin{aligned}
   \bar K_t^{1*} &= \left(\mathcal{Q}_t^{11} - \mathcal{Q}_t^{12} \left( \mathcal{Q}_t^{22} \right)^{-1}  \mathcal{Q}_t^{12\top}\right)^{-1} \times \left(\mathcal{Q}_t^{12} \left( \mathcal{Q}_t^{22} \right)^{-1}  \mathcal{Q}_t^{02\top} -  \mathcal{Q}_t^{01\top} \right), \\
   \bar K_t^{2*} &= \left(\mathcal{Q}_t^{22} - \mathcal{Q}_t^{12\top} \left( \mathcal{Q}_t^{11} \right)^{-1} \mathcal{Q}_t^{12} \right)^{-1} \times \left( \mathcal{Q}_t^{12\top} \left( \mathcal{Q}_t^{11} \right)^{-1}  \mathcal{Q}_t^{01\top} -  \mathcal{Q}_t^{02\top} \right),
\end{aligned}
\end{equation}
where $\bar{K}_t^{1*} = [K_t^{1*} \ \ 0_{m_1 \times n} \ \ 0_{m_1 \times n}] $ and $\bar{K}_t^{2*} = [K_t^{2*} \ \ 0_{m_2 \times n} \ \ 0_{m_2 \times n} ] $. Note that once the equilibrium augmented state feedback gains $\bar K_t^{1*}$ and $\bar K_t^{2*}$ are determined, then the state estimate feedback gains $K_t^{1*}$ and $K_t^{2*}$ are given by the left $m_1 \times n$ and $m_2 \times n$ blocks of $\bar K_t^{1*}$ and $\bar K_t^{2*}$.

Furthermore, the equilibrium cost-to-go functions take the quadratic form $J_t(\mathbf{X}_t) = \mathbf{X}_t^\top P_t \mathbf{X}_t + r_t$, with the cost parameters satisfying the backward recursion for $t=T-1,\ldots,0$,
\begin{equation} \label{DP}
\begin{aligned}
P_t &=  \begin{bmatrix} I_{3n} \\ \bar{\mathbf{K}}_t^{1*}  \\ \bar{\mathbf{K}}_t^{2*}   \end{bmatrix}^\top  \mathcal{Q}_t \left( P_{t+1} \right) \begin{bmatrix} I_{3n} \\ \bar{\mathbf{K}}_t^{1*}  \\ \bar{\mathbf{K}}_t^{2*}  \end{bmatrix}, \\
   r_t &=  r_{t+1} + \mathbf{tr} \left( P_{t+1} \mathbf{G}_t \mathbf{W}_t \mathbf{G}_t^T \right) .
\end{aligned}
\end{equation}
where $\bar{\mathbf{K}}_t^{1*} = [K_t^{1*}   \ \ -K_t^{1*} \ \ 0_{m_1 \times n}] $ and $\bar{\mathbf{K}}_t^{2*} = [K_t^{2*}  \ \ 0_{m_2 \times n} \ \ -K_t^{2*} ] $.
\end{itemize}
\end{proposition}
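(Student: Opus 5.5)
The plan is a backward induction on $t$ that specializes Lemma \ref{DPalgorithm} to linear dynamics and quadratic costs. The induction hypothesis is that $J_{t+1}(\mathbf{X}_{t+1}) = \mathbf{X}_{t+1}^\top P_{t+1}\mathbf{X}_{t+1} + r_{t+1}$. The base case $P_T=\mathbf{Q}_T$, $r_T=0$ is immediate from \eqref{initializeDP} and \eqref{stagecostaugmented}.

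For the inductive step, I substitute $\mathbf{f}_t(\mathbf{X}_t,u_t^1,u_t^2,\mathbf{w}_t)=\mathbf{A}_t\mathbf{X}_t+\mathbf{B}_t^1u_t^1+\mathbf{B}_t^2u_t^2+\mathbf{G}_t\mathbf{w}_t$ into the function $\mathcal{Q}_t$ of Lemma \ref{DPalgorithm}. Since $\mathbf{w}_t$ is zero mean and independent of $(\mathbf{X}_t,u_t^1,u_t^2)$, the cross terms vanish, and I expect to obtain
\[
\mathcal{Q}_t(\mathbf{X}_t,u_t^1,u_t^2)=\begin{bmatrix}\mathbf{X}_t\\u_t^1\\u_t^2\end{bmatrix}^\top\mathcal{Q}_t(P_{t+1})\begin{bmatrix}\mathbf{X}_t\\u_t^1\\u_t^2\end{bmatrix}+r_{t+1}+\mathbf{tr}\left(P_{t+1}\mathbf{G}_t\mathbf{W}_t\mathbf{G}_t^\top\right),
\]
with the block matrix of \eqref{DPqfunction}. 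This also gives the $r_t$ recursion directly, because the additive constant does not depend on the inputs.

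The key step is the pair of pointwise conditional optimizations. For player $1$, $\mathbf{E}[\mathcal{Q}_t\mid z_t^1]$ is strictly convex in $u_t^1$ because $\mathcal{Q}_t^{11}\succ0$. Its first-order condition is
\[
\mathcal{Q}_t^{11}u_t^1+\mathcal{Q}_t^{01\top}\mathbf{E}[\mathbf{X}_t\mid z_t^1]+\mathcal{Q}_t^{12}\mathbf{E}[u_t^{2*}\mid z_t^1]=0.
\]
The terms quadratic in $\mathbf{X}_t$ do not depend on $u_t^1$, so they drop out of the first-order condition. I then evaluate the conditional expectations using unbiasedness from Proposition \ref{prop1:partialinfofilters} and the self-projection in Assumption \ref{asmp:belief_projection}:
\[
\mathbf{E}[x_t\mid z_t^1]=z_t^1,\qquad \mathbf{E}[e_t^1\mid z_t^1]=0,\qquad \mathbf{E}[e_t^2\mid z_t^1]=z_t^1-\mathbf{E}[z_t^2\mid z_t^1]=0.
\]
Hence $\mathbf{E}[\mathbf{X}_t\mid z_t^1]=[z_t^{1\top}\ 0\ 0]^\top$. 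With the linear ansatz $u_t^{2*}=K_t^2z_t^2$, the same assumption gives $\mathbf{E}[u_t^{2*}\mid z_t^1]=K_t^2z_t^1$. The symmetric computation for player $2$ uses $\mathcal{Q}_t^{22}\prec0$, which makes the problem strictly concave.

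Writing $\hat{\mathbf{X}}^i=[z_t^{i\top}\ 0\ 0]^\top$ and $u_t^i=\bar K_t^{i}\hat{\mathbf{X}}^i$, the two stationarity conditions become a pair of linear matrix equations in $(\bar K_t^1,\bar K_t^2)$:
\[
\mathcal{Q}_t^{11}\bar K_t^1+\mathcal{Q}_t^{12}\bar K_t^2=-\mathcal{Q}_t^{01\top},\qquad \mathcal{Q}_t^{12\top}\bar K_t^1+\mathcal{Q}_t^{22}\bar K_t^2=-\mathcal{Q}_t^{02\top},
\]
restricted to the first column block. I would eliminate $\bar K_t^2$ from the second equation using $(\mathcal{Q}_t^{22})^{-1}$ and substitute into the first, which gives the Schur complement formula \eqref{DPgains} for $\bar K_t^{1*}$; the expression for $\bar K_t^{2*}$ follows symmetrically. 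The Schur complements are invertible because $\mathcal{Q}_t^{11}-\mathcal{Q}_t^{12}(\mathcal{Q}_t^{22})^{-1}\mathcal{Q}_t^{12\top}\succ0$ and $\mathcal{Q}_t^{22}-\mathcal{Q}_t^{12\top}(\mathcal{Q}_t^{11})^{-1}\mathcal{Q}_t^{12}\prec0$. The same two properties, together with the strict convexity and concavity, show that the stationary point is a saddle point.

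Finally, I rewrite the equilibrium inputs in terms of the augmented state: $u_t^{1*}=K_t^{1*}(x_t-e_t^1)=\bar{\mathbf{K}}_t^{1*}\mathbf{X}_t$, and likewise $u_t^{2*}=\bar{\mathbf{K}}_t^{2*}\mathbf{X}_t$. Plugging these into \eqref{DPrecursion} yields the quadratic form $\mathbf{X}_t^\top P_t\mathbf{X}_t$ with $P_t$ as in \eqref{DP}, which closes the induction.

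I expect the main obstacle to be justifying the conditional-expectation step, since it is the only place where Assumption \ref{asmp:belief_projection} enters. It must be used both for $\mathbf{E}[e_t^2\mid z_t^1]$ and for the opponent's input, and one must check that the linear ansatz for $u_t^{2*}$ is self-consistent with the best response it produces.
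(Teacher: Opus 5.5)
Your proposal is correct and follows essentially the same route as the paper: induction on the quadratic form of $J_{t+1}$, the quadratic $\mathcal{Q}$-function whose trace term gives the $r_t$ recursion, and first-order conditions for the conditional pointwise problems of Lemma \ref{DPalgorithm}. As in the paper, you use Assumption \ref{asmp:belief_projection} to set $\mathbf{E}[\mathbf{X}_t\mid z_t^i]=[z_t^{i\top}\ 0\ 0]^\top$ and $\mathbf{E}[u_t^{j*}\mid z_t^i]=K_t^j z_t^i$, then solve the resulting linear system by Schur complements; your signs are the correct ones and reproduce \eqref{DPgains}, whereas the paper's intermediate stationarity equations omit the minus sign. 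The one thing to fix is attributing $\mathbf{E}[x_t\mid z_t^1]=z_t^1$ to unbiasedness. Unbiasedness ($\mathbf{E}e_t^1=0$) gives only the unconditional mean, and the coupled filter is not in general the conditional-mean estimator, so this identity must be taken, as the paper does, as part of Assumption \ref{asmp:belief_projection}: $z_t^i$ is player $i$'s zeroth-order belief about $x_t$.
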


\begin{proof}
We show via induction that the equilibrium cost-to-go functions are quadratic in the augmented state $\mathbf{X}_t$ (i.e., they are jointly quadratic in the state and estimation errors). Along the way, we show that under Assumption \ref{asmp:belief_projection} the saddle-point equilibrium control strategies are also linear in the augmented state $\mathbf{X}_t$.

\textbf{Base case:} The initialization of the backward recursion \eqref{initializeDP} with the quadratic stage cost function \eqref{stagecostaugmented} estabilishes $J_T(\mathbf{X}_T) = \mathbf{X}_T^\top P_T \mathbf{X}_T$.

\textbf{Inductive step:} Suppose $J_{t+1}(\mathbf{X}_{t+1}) = \mathbf{X}_{t+1}^\top P_{t+1} \mathbf{X}_{t+1} + r_{t+1}$. Substituting this into the $\mathcal{Q}$-function in \eqref{qfunction} and evaluating the expectation with respect to the augmented disturbance yields the jointly quadratic function
\begin{equation} \label{qfunctioncost}
\begin{aligned}
\mathcal{Q}_t \left( \mathbf{X}_t, u_t^1, u_t^2 \right) &=\begin{bmatrix} \mathbf{X}_{t} \\ u_{t}^1 \\ u_{t}^2  \end{bmatrix}^\top
    \begin{bmatrix}
      \mathbf{Q}_t & 0 & 0 \\ 0 &  R_t  & 0 \\ 0 & 0 & S_t
    \end{bmatrix} \begin{bmatrix} \mathbf{X}_{t} \\ u_{t}^1 \\ u_{t}^2  \end{bmatrix}  +  \mathbf{E}_{\mathbf{w}_t}  J_{t+1}\left( \mathbf{A}_t \mathbf{X}_t +  \mathbf{B}_t^1 u_t^1 +  \mathbf{B}_t^2  u_t^2  + \mathbf{G}_t \mathbf{w}_t \right) \\
 &=  \begin{bmatrix} \mathbf{X}_{t} \\ u_{t}^1 \\ u_{t}^2  \end{bmatrix}^\top  \mathcal{Q}_t(P_{t+1})   \begin{bmatrix} \mathbf{X}_{t} \\ u_{t}^1 \\ u_{t}^2  \end{bmatrix} + \mathbf{tr} \left( P_{t+1} \mathbf{G}_t \mathbf{W}_t \mathbf{G}_t^T \right) + r_{t+1}.
 \end{aligned}
\end{equation}
Since players do not observe $\mathbf{X}_t$ directly, they optimize based on their state estimates. By Lemma \ref{DPalgorithm}, the equilibrium strategies satisfy
\begin{equation}
\begin{aligned}
    u_{t}^{1*} &= \pi_t^* \left( z_t^1 \right) = \arg \min_{u_t^1} \mathbf{E} \left[ \mathcal{Q}_t \left(\mathbf{X}_t, u_t^1, u_t^{2*} \right) \mid z_t^1 \right], \\
    u_{t}^{2*} &= \mu_t^* \left( z_t^2 \right) = \arg \max_{u_t^2} \mathbf{E} \left[ \mathcal{Q}_t \left(\mathbf{X}_t, u_t^{1*}, u_t^{2} \right) \mid z_t^2 \right].
\end{aligned}
\end{equation}
Taking derivatives with respect to the inputs, provided that $\mathcal{Q}_t^{11} \succ 0$ and $\mathcal{Q}_t^{22} \prec 0$, we get a saddle-point equilibrium when the inputs jointly satisfy
\begin{equation}\label{eqn:firstorder}
\begin{aligned}
    u_{t}^{1*} &= \left(\mathcal{Q}_t^{11}\right)^{-1} \left( \mathcal{Q}_t^{01\top} \mathbf{E}\left[\mathbf{X}_t \mid z_{t}^1 \right] + \mathcal{Q}_t^{12} \mathbf{E}\left[u_{t}^{2*} \mid z_{t}^1 \right] \right), \\
    u_{t}^{2*} &= \left(\mathcal{Q}_t^{22}\right)^{-1} \left( \mathcal{Q}_t^{02\top} \mathbf{E}\left[\mathbf{X}_t \mid z_{t}^2 \right] + \mathcal{Q}_t^{12\top} \mathbf{E}\left[u_{t}^{1*} \mid z_{t}^2 \right] \right).
\end{aligned}
\end{equation}
Under Assumption \ref{asmp:belief_projection}, we have $\mathbf{E}\left[\mathbf{X}_t \mid z_t^i\right] = \begin{bmatrix} z_t^i \\ 0 \\ 0 \end{bmatrix}$ because of $\mathbf{E}\left[x_t \mid z_t^i\right] = z_t^i$ and $\mathbf{E}\left[e_t^j \mid z_t^i\right] = 0$ for any $i,j\in\{1,2\}$. Moreover, each player uses its state estimate to approximate $\mathbf{E}\left[\mathbf{E}\left[\mathbf{X}_t \mid z_t^j\right] \mid z_t^i\right]$ by $\mathbf{E}\left[\mathbf{X}_t \mid z_t^i\right]$. This motivates seeking equilibrium strategies linear in augmented state estimates: $u_t^{1*}=\bar K_t^{1*} \mathbf{E}\left[\mathbf{X}_t \mid z_t^1\right]$ and $u_t^{2*}=\bar K_t^{2*} \mathbf{E}\left[\mathbf{X}_t \mid z_t^2\right]$. Substituting these strategies into \eqref{eqn:firstorder} gives
\begin{equation} 
\begin{aligned}
     \bar K_t^{1*} \mathbf{E}\left[\mathbf{X}_t \mid z_{t}^1 \right] &= \left( \mathcal{Q}_t^{11} \right)^{-1} \left( \mathcal{Q}_t^{01\top} \mathbf{E}\left[\mathbf{X}_t \mid z_{t}^1 \right] + \mathcal{Q}_t^{12} \mathbf{E}\left[ \bar K_t^{2*} \mathbf{E} \left[\mathbf{X}_t \mid z_{t}^2 \right] \mid z_{t}^1 \right] \right), \\
     \bar K_t^{2*} \mathbf{E}\left[\mathbf{X}_t \mid z_{t}^2 \right] &= \left( \mathcal{Q}_t^{22} \right)^{-1} \left( \mathcal{Q}_t^{02\top} \mathbf{E}\left[\mathbf{X}_t \mid z_{t}^2 \right] + \mathcal{Q}_t^{12\top} \mathbf{E}\left[ \bar K_t^{1*} \mathbf{E}\left[\mathbf{X}_t \mid z_{t}^1 \right]  \mid z_{t}^2 \right] \right).
\end{aligned}
\end{equation}
Then the augmented state feedback gains $\bar K^{1*}$ and $\bar K^{2*}$ jointly satisfy
\begin{equation}
\begin{aligned}
     \bar K_t^{1*} &= \left( \mathcal{Q}_t^{11} \right)^{-1} \left(\mathcal{Q}_t^{01\top} + \mathcal{Q}_t^{12}  \bar K_t^{2*} \right), \\
     \bar K_t^{2*} &= \left( \mathcal{Q}_t^{22} \right)^{-1} \left(\mathcal{Q}_t^{02\top} + \mathcal{Q}_t^{12\top} \bar K_t^{1*} \right),
\end{aligned}
\end{equation}
which can be solved jointly to give \eqref{DPgains}. Then the state estimate feedback gains $K_t^{1*}$ and $K_t^{2*}$ are given by the left $m_1 \times n$ and $m_2 \times n$ blocks of $\bar K_t^{1*}$ and $\bar K_t^{2*}$. Using $z_t^i = x_t - e_t^i$, these strategies can be expressed as linear in the augmented state as  $u_t^{1*} =  \bar{\mathbf{K}}_t^{1*} \mathbf{X}_t$ and $u_t^{2*} = \bar{\mathbf{K}}_t^{2*} \mathbf{X}_t$ where $\bar{\mathbf{K}}_t^{1*} = [K_t^{1*} \ \ -K_t^{1*} \ \ 0_{m_1 \times n}] $ and $\bar{\mathbf{K}}_t^{2*} = [K_t^{2*} \ \ 0_{m_2 \times n} \ \ -K_t^{2*} ] $. Finally, substituting these equilibrium strategies back into the cost expression \eqref{qfunctioncost} shows that the right-hand side of \eqref{DPrecursion} is quadratic and therefore $J_t(\mathbf{X}_t) = \mathbf{X}_{t}^\top P_{t} \mathbf{X}_{t} + r_{t}$, leading to the recursion for the equilibrium cost-to-go parameters given by \eqref{DP}.
\end{proof}

\begin{discussion}
In contrast to the single-player case, where the estimation error can be made independent of the control input, here each player's estimation error is affected by the opposing player's control input, another indication of the signaling phenomenon. This couples the estimation and control problems, since the equilibrium control gains depend on the estimator gains through the coupled dynamics of the augmented state.

The cost matrices $\mathbf{P} = [P_0, P_1, \ldots, P_T]$ and equilibrium feedback gains $\mathbf{K} = \{K_t^{1*}, K_t^{2*} \}_{t=0}^{T-1}$ can be computed with the backward recursion specified in Proposition \ref{DPalgorithmLQ} based on the common knowledge model $\mathcal{M}$ and filter parameters $\mathbf{F} = \{(A_t^1, \bar B_t^1, \bar L_t^1), (A_t^2, \bar B_t^2, \bar L_t^2)\}_{t=1}^{T-1}$. We express this backward recursion compactly as $$(\mathbf{P}, \mathbf{K}) = \mathtt{Backward}( \mathcal{M}, \mathbf{F}).$$ In the standard single-player LQG problem, the feedback control gain and the costs associated with the underlying state do not depend on the filter parameters.

When the (appropriately scalarized) penalty $S_t$ on the maximizing player approaches (negative) infinity, their gain matrices goes to zero, and the problem reduces to the standard single-player LQG problem with separation of estimation and control. 
\end{discussion}

\subsection{A Forward-Backward Algorithm for Computing Equilibrium Solutions} \label{sec:forward_backward_algorithm}
In summary, Sections \ref{partialinfofilter} and \ref{partialinfoDP} have shown that the problem of finding saddle-point equilibrium strategies for our linear quadratic stochastic dynamic game with partial and asymmetric information can be reduced to solving the pair of coupled forward and backward recursions described in Propositions \ref{prop1:partialinfofilters} and \ref{DPalgorithmLQ}. In particular, the forward recursion propagates belief states and computes equilibrium estimator gains for given values of the feedback gains and estimation error cost weight matrices (which are obtained from the lower right $2n\times 2n$ block of the cost matrices from Proposition \ref{DPalgorithmLQ}). The backward recursion propagates equilibrium cost matrices and computes equilibrium feedback control gaines for given values of the filter parameters. The coupled recursions are nonlinear matrix equations in the cost matrices $\mathbf{P}$, joint error covariances $\boldsymbol{\Sigma}$, feedback control gains $\mathbf{K}$ and filter parameters $\mathbf{F}$:
\begin{equation} \label{coupledforwardbackward}
\begin{aligned}
\left( \boldsymbol{\Sigma}, \mathbf{F} \right) &= \mathtt{Forward} \left( \mathcal{M}, \mathbf{K}, \boldsymbol{\Gamma} \right), \\
\left( \mathbf{P}, \mathbf{K} \right) &= \mathtt{Backward}\left( \mathcal{M}, \mathbf{F} \right),
\end{aligned}
\end{equation}
where the coupling comes from the expression for the equilibrium filter parameters in \eqref{optimalgains} and \eqref{onestepparams}, which depends on the control gains and error covariances, and the expression for the equilibrium feedback gains \eqref{DPgains}, which depends on the cost matrices and filter parameters. It is possible and natural to set the filter weight matrices $\boldsymbol{\Gamma}$ to the block of the cost matrices $\mathbf{P}$ corresponding to the estimation error. Then the forward recursion also depends on the cost matrices.
    
There are no existing algorithms to jointly solve these coupled recursions to our best knowledge. In principle, general numerical root finding algorithms could be applied. Some of these, such as Newton's method and variations, may be related to variations of other dynamic programming algorithms like policy iteration and will be the subject of future research. 

One natural approach is to iterate the forward and backward passes until convergence. In particular, we can initialize the feedback gain matrices $\mathbf{K}_0$ and estimation error weight matrix $\boldsymbol{\Gamma}_0$ and compute for $k=0,1,\ldots,$
\begin{equation}
\begin{aligned}
\left(\boldsymbol{\Sigma}_{k+1}, \mathbf{F}_{k+1}\right) &= \mathtt{Forward}\left(\mathcal{M}, \mathbf{K}_k, \boldsymbol{\Gamma}_k\right), \\
\left(\mathbf{P}_{k+1}, \mathbf{K}_{k+1} \right) &= \mathtt{Backward}\left(\mathcal{M}, \mathbf{F}_{k+1}\right).
\end{aligned}
\end{equation}
Clearly, if the iterates $(\boldsymbol{\Sigma}_{k+1}, \mathbf{F}_{k+1},\mathbf{P}_{k+1}, \mathbf{K}_{k+1})$ converge as $k\rightarrow \infty$, then we obtain a solution for the coupled forward and backward recursions \eqref{coupledforwardbackward}, which corresponds to an equilibrium strategy of the linear form \eqref{onestepfilters}, \eqref{linearestimatefeedback} for our stochastic dynamic game with partial and asymmetric information. 

To obtain an implementable algorithm for computing an approximate fixed point, we propose an iterative forward-backward algorithm in Algorithm \ref{algorithm:forward-backward}. 
\begin{algorithm}
\caption{Iterative Forward-Backward Algorithm}\label{algorithm:forward-backward}
\begin{algorithmic}[1]
    \Require Common knowledge model $\mathcal{M}$, initial gain matrices $\mathbf{K}$ and weight matrix $\boldsymbol{\Gamma}$, convergence tolerance $\epsilon$
    \While{$ \| \mathbf{K} - \mathbf{K}^- \| + \| \mathbf{F} - \mathbf{F}^- \| > \epsilon $}
        \State Store previous iterates: $\mathbf{K}^- = \mathbf{K}$, $\mathbf{F}^- = \mathbf{F}$ 
        \State Forward pass: $\left( \boldsymbol{\Sigma}, \mathbf{F} \right) = \mathtt{Forward}\left( \mathcal{M}, \mathbf{K}, \boldsymbol{\Gamma} \right)$ \;
        \State Backward pass: $\left( \mathbf{P}, \mathbf{K} \right) = \mathtt{Backward}\left( \mathcal{M}, \mathbf{F} \right) $ \;
    \EndWhile
    \Ensure Equilibrium gain matrices $\mathbf{K}$, filter parameters $\mathbf{F}$, cost matrices $\mathbf{P}$, error covariances $\boldsymbol{\Sigma}$
\end{algorithmic}
\end{algorithm}

Our implementation for this algorithm is available at \url{https://gitlab.com/scratch7473433/partialinfodynamicprogramming}. In Section \ref{sec:numerical_experiments}, we present numerical experiments that demonstrate convergence of the algorithm to equilibrium strategies for stochastic dynamic games with asymmetric and partial information. 

\section{Infinite Horizon LQG Dynamic Games with Partial and Asymmetric Information}\label{Section:InfHorizon}
We now consider an infinite-horizon LQG dynamic game with time-invariant common knowledge model  $\mathcal{M} = \{ A, B^1, B^2, W, C^1, C^2, V^1, V^2, Q, R, S \}$ and average steady-state cost objective
\begin{equation} \label{averagecostLQR}
\begin{aligned}
    J(u^1, u^2) \ = \lim_{T \rightarrow \infty} \frac{1}{T}\mathbf{E} \left[ \sum_{t=0}^{T-1} x_t^\top Q x_t + u_t^{1\top} R u_t^1 + u_t^{2\top} S u_t^2 \right].
\end{aligned}
\end{equation}
It is also possible to consider a discounted cost variation. Note that the initial state distribution is not included in the model since it does not impact the steady-state average-cost. In single-player optimal control problems, value iteration is a standard algorithm for computing optimal feedback control laws and value functions. Value iteration computes the right-hand side of the dynamic programming recursion, also known as the Bellman operator, from an initial value function until convergence. In this section, we describe an analogous value iteration-like algorithm for our infinite-horizon dynamic game with partial and asymmetric information based on iterating forward and backward operators until convergence.

\subsection{Forward Recursion Convergence}
If the forward belief propagation of state estimates in Proposition \ref{prop1:partialinfofilters} converges as $T \rightarrow \infty$ for a given pair of feedback gain matrices $(K^1, K^2)$ and weight matrix $\Gamma$, then the steady-state covariance $\Sigma := \lim_{t \rightarrow \infty} \Sigma^-_t$ satisfies
\begin{equation} \label{steadystatecovariance}
\Sigma =  \bar A \left(\Gamma \Sigma \Gamma^\top + \bar L \bar V \bar L^\top \right) \bar A^\top + \bar W ,
\end{equation}
where 
\begin{eqnarray} \label{steadystateblockmatrices}
\begin{aligned}
\Gamma = \begin{bmatrix} I - L^1 C^1 \hspace{-0.2cm} & 0 \\ 0 & I - L^2 C^2 \end{bmatrix}, \quad 
\bar A =  \begin{bmatrix} A + B^2 K^2 \hspace{-0.2cm} & - B^2 K^2 \\ -B^1 K^1 & A + B^1 K^1 \end{bmatrix}, \quad \bar L &= \begin{bmatrix} L^1  & 0 \\ 0 & L^2 \end{bmatrix}, \quad \bar W =  \begin{bmatrix} W  & 0 \\ 0 & W \end{bmatrix},\quad \bar V = \begin{bmatrix} V^1  & 0 \\ 0 & V^2 \end{bmatrix}.
\end{aligned}
\end{eqnarray}
The filters for state estimation take the time-invariant form
\begin{equation} \label{steadystatefilters}
\begin{aligned}
    z^1_{t+1} &= A^1 z_t^1 + \bar B^1 u_t^1 + \bar L^1 \left(y_t^1 - C^1 z_t^1\right), \\
    z^2_{t+1} &= A^2 z_t^2 + \bar B^2 u_t^2 + \bar L^2 \left(y_t^2 - C^2 z_t^2\right),
\end{aligned}
\end{equation}
where the steady-state filter parameters are given by 
\begin{equation} \label{steadystateoptimalgains}
\begin{aligned}
L^1 &= \Sigma^{11} C^{1\top} \left( C^{1} \Sigma^{11} C^{1\top} + V^1 \right)^{-1} + \left(\Gamma^{11}\right)^{-1}  \Gamma^{12} \left( \Sigma^{12\top} C^{1\top} + L^2 C^2 \Sigma^{12\top} C^{1\top} \right) \\
L^2 &= \Sigma^{22} C^{2\top} \left( C^{2} \Sigma_{t}^{22} C^{2\top} + V^2 \right)^{-1} + \left(\Gamma^{22}\right)^{-1}  \Gamma^{12\top} \left(\Sigma^{12-} C^{2\top} + L^1 C_t^1 \Sigma^{12-} C^{2\top} \right)
\end{aligned}
\end{equation}
with
\begin{eqnarray} \label{steadystateparams}
\begin{aligned}
A^1 = A + B^2 K^2, \quad A^2 = A + B^1 K^1, \quad \bar B^1 = B^1, \quad &\bar B^2 = B^2, \quad \bar L^1 = A^1 L^1,\quad\bar L^2 = A^2 L^2.
\end{aligned}
\end{eqnarray}

We define the \textbf{one-step forward operator} as the right-hand side of \eqref{steadystatecovariance} using the steady-state filter gains \eqref{steadystateoptimalgains} for a given pair of feedback control gains $K = (K^1, K^2)$ with common knowledge model $\mathcal{M}$, denoted by $\mathcal{F}(\mathcal{M}, \Sigma, K, \Gamma)$.

\subsection{Backward Recursion Convergence}
Likewise, if the quadratic component $P_t$ of the backward dynamic programming recursion in Proposition \ref{DPalgorithmLQ} converges for a given pair of time-invariant filter parameters $(A^1, \bar B^1, \bar L^1)$ and $(A^2, \bar B^2, \bar L^2)$, then the total cost matrix $P := \lim_{t \rightarrow -\infty} P_t$ satisfies 
\begin{equation} \label{Bellman}
\begin{aligned}
P &=  \begin{bmatrix} I_{3n} \\ \bar{\mathbf{K}}^{1}  \\ \bar{\mathbf{K}}^{2}   \end{bmatrix}^\top  \mathcal{Q} (P) \begin{bmatrix} I_{3n} \\ \bar{\mathbf{K}}^{1}  \\ \bar{\mathbf{K}}^{2}   \end{bmatrix},
\end{aligned}
\end{equation}
where the quantities $\bar{\mathbf{K}}^{1}, \bar{\mathbf{K}}^{2}$, and $\mathcal{Q} (P)$ are defined as before but with the time-invariant model parameters. In particular,
\begin{equation} \label{steadystateDPqfunction}
\begin{aligned}
&\mathcal{Q} (P) 
= \begin{bmatrix}
     \mathbf{Q} + \mathbf{A}^\top P \mathbf{A} & \mathbf{A}^\top P \mathbf{B}^1 & \mathbf{A}^\top P \mathbf{B}^2  \\
      \mathbf{B}^{1\top} P \mathbf{A} & R + \mathbf{B}^{1\top} P \mathbf{B}^1 & \mathbf{B}^{1\top} P \mathbf{B}^2 \\
      \mathbf{B}^{2\top} P \mathbf{A} &  \mathbf{B}^{2\top} P \mathbf{B}^1 & S +  \mathbf{B}^{2\top} P \mathbf{B}^2 
 \end{bmatrix}
\end{aligned}
\end{equation}
with
\begin{equation} \label{steadystateblockmatrices}
\begin{aligned}
\mathbf{A} = \begin{bmatrix}
     A & 0 & 0 \\ A - A^1 & A^1 - L^1 C^1 &0 \\ A - A^2 & 0 & A^2 - L^2 C^2
    \end{bmatrix}, \quad
\mathbf{B}^1 = \begin{bmatrix}
     B^1  \\ B^1 - \bar B^1  \\ B^1
    \end{bmatrix}, \quad
\mathbf{B}^2 = \begin{bmatrix}
      B^2  \\ B^2 \\  B^2 - \bar B^2
    \end{bmatrix}, \quad \mathbf{Q} = \begin{bmatrix}
      Q & 0 & 0 \\ 0 &  0  & 0 \\ 0 & 0 & 0
    \end{bmatrix}.
\end{aligned}
\end{equation}
The feedback strategies take the linear time-invariant form $u_t = K^{1} z_t^1$ and $u_t^2 = K^{2} z_t^2$, with the feedback gain matrices $K^{1}$ and $K^{2}$ given by the left $m_1 \times n$ and $m_2 \times n$ blocks, respectively, of the matrices
\begin{equation} \label{steadystateDPgains}
\begin{aligned}
   \bar{K}^{1} &= \left(\mathcal{Q}^{11} - \mathcal{Q}^{12} \left(\mathcal{Q}^{22}\right)^{-1} \mathcal{Q}^{12\top}\right)^{-1} \times \left(\mathcal{Q}^{12} \left(\mathcal{Q}^{22}\right)^{-1}  \mathcal{Q}^{02\top} -  \mathcal{Q}^{01\top} \right), \\
   \bar{K}^{2} &= \left(\mathcal{Q}^{22} - \mathcal{Q}^{12\top} \left(\mathcal{Q}^{11}\right)^{-1} \mathcal{Q}^{12} \right)^{-1} \times \left(\mathcal{Q}^{12\top} \left(\mathcal{Q}^{11}\right)^{-1}  \mathcal{Q}^{01\top} -  \mathcal{Q}^{02\top} \right), 
\end{aligned}
\end{equation}
from which we get $\bar{\mathbf{K}}^{1} = [K^{1}  \ \ -K^{1} \ \ 0_{m_1 \times n}] $ and $\bar{\mathbf{K}}^{2} = [K^{2}  \ \ 0_{m_2 \times n} \ \ -K^{2} ] $.

The constant term $r_t$ from the recursion in Proposition \ref{DPalgorithmLQ} does not converge, but if the quadratic component converges, then in steady state $r_t$ increases by a constant amount at each step that corresponds to the average steady-state cost
\begin{equation} \label{optimalaveragecost}
   J = \mathbf{tr}\left(P \mathbf{G} \mathbf{W} \mathbf{G}^T\right),
\end{equation}
with 
\begin{equation} \label{moresteadystateblockmatrices}
\begin{aligned}
\mathbf{G} &= \begin{bmatrix}
     I & 0 & 0 \\ I &  -L^1  & 0 \\ I & 0 & - L^2
    \end{bmatrix}, \quad
\mathbf{W} &= \begin{bmatrix}
     W & 0 & 0 \\ 0 &  V^1  & 0 \\ 0 & 0 & V^2
    \end{bmatrix}.
\end{aligned}
\end{equation}

We define the \textbf{one-step backward operator} as the right-hand side of \eqref{Bellman} using the feedback control gains \eqref{steadystateDPgains} for a given pair of filter parameters $F = ((A_1, \bar B_1, L_1), (A_2, \bar B_2, L_2))$ with common knowledge model $\mathcal{M}$, denoted by $\mathcal{B}(\mathcal{M}, P, F)$.

\subsection{Joint Convergence and Forward-Backward Value Iteration}
The steady-state equations \eqref{steadystatecovariance} and \eqref{Bellman} with \eqref{steadystateoptimalgains} and \eqref{steadystateDPgains} are coupled matrix equations for the steady-state covariance $\Sigma$, filter parameters $F$, cost matrix $P$, and feedback gains $K$. Using the one-step forward and backward operators defined in the previous subsections, we can express this as
\begin{equation} \label{jointsteadystate}
\begin{aligned}
   \Sigma &= \mathcal{F}\left(\mathcal{M}, \Sigma, K, \Gamma\right), \\
   P &= \mathcal{B}\left(\mathcal{M}, P, F\right),
\end{aligned}
\end{equation} 
These nonlinear matrix equations can be viewed as a set of coupled generalized
Riccati equations for estimation and control. If both the forward and backward
recursions simultaneously converge and the steady-state covariance $\Sigma$,
filter parameters $F$, cost matrix $P$, and feedback gains $K$ jointly satisfy
\eqref{jointsteadystate}, then we obtain equilibrium strategies for our dynamic
game. Once again, there are no existing algorithms to jointly solve these
coupled equations to the best of our knowledge. General numerical root finding
algorithms such as Newton's method could be applied and will be the subject of
future research. 

A natural approach, analogous to value iteration in single-player optimal control problems, is to iterate the right-hand side of \eqref{jointsteadystate} from an initial condition until convergence. We propose such a forward-backward value-iteration algorithm, described below in Algorithm \ref{algorithm:forward-backward-VI}.
\begin{algorithm}
\caption{Forward-Backward Value Iteration Algorithm}\label{algorithm:forward-backward-VI}
\begin{algorithmic}[1]
    \Require Common knowledge time-invariant model $\mathcal{M}$, inital error covariance $\Sigma$, initial gain $K$, cost $P$, and weight $\Gamma$ matrices, convergence tolerance $\epsilon$
    \While{$ \| \Sigma - \Sigma^- \| + \| P - P^- \| > \epsilon $}
        \State Store previous iterates: $\Sigma^- = \Sigma$, $P^- = P$ 
        \State One-Step Forward Operator: $\Sigma = \mathcal{F}\left(\mathcal{M}, \Sigma, K, \Gamma\right)$ \;
        \State Update filter parameters $F$ using \eqref{steadystateoptimalgains} and \eqref{steadystateparams} \;
        \State One-Step Backward Operator: $P = \mathcal{B}\left( \mathcal{M}, P, F \right) $ \;
        \State Update control gains $K$ using \eqref{steadystateDPgains} \; \;
    \EndWhile
    \Ensure Equilibrium steady-state gain matrices $K$, filter parameters $F$, cost matrix $P$, error covariance $\Sigma$
\end{algorithmic}
\end{algorithm}

Our implementation for this infinite-horizon algorithm is also available at \url{https://gitlab.com/scratch7473433/partialinfodynamicprogramming}. In the next section we present numerical experiments that demonstrate convergence of the algorithm to equilibrium strategies for our infinite-horizon stochastic dynamic game with asymmetric and partial information. 

\section{Numerical Experiments} \label{sec:numerical_experiments}
We illustrate our results in a two-player linear quadratic pursuit-evasion game with partial and asymmetric information. The player dynamics are
\begin{align}
    \begin{split}
        x_{t+1}^1 ={}& A x_t^1 + B^1 u_t^1 + G w_t^1, \\
        x_{t+1}^2 ={}& A x_t^2 + B^2 u_t^2 + G w_t^2,
    \end{split}
\end{align}
where $x_t^1$ is the state of the pursuer and $x_t^2$ is the state of the evader, and the disturbances $w_t^1$ and $w_t^2$ are zero mean with covariances $W_1$ and $W_2$, respectively. We consider time-discretized double integrator dynamics in the plane
\begin{equation*}
    A = \begin{bmatrix}
        1 & 0 & \Delta t & 0 \\
        0 & 1 & 0 & \Delta t \\
        0 & 0 & 1  & 0 \\
        0 & 0 & 0 & 1
    \end{bmatrix}, \quad B^1 = B^2 = \begin{bmatrix}
        0 & 0  \\
        0 & 0 \\
        \Delta t & 0 \\
        0 & \Delta t
    \end{bmatrix}, \quad W^1 = W^2 = 10^{-2} I_2, \quad G^1 = G^2 = \begin{bmatrix}
        0 & 0  \\
        0 & 0 \\
        1 & 0 \\
        0 & 1
    \end{bmatrix},
\end{equation*}
with $\Delta t = 0.1$. We define the relative state $x_t \coloneqq x_t^1 - x_t^2$, which evolves according to
\begin{equation}
    x_{t+1} = A x_t + B^1 u_t^1 - B^2 u_t^2 + w_t,
\end{equation}
where $w_t \sim \mathcal{N}(0, G(W^1 + W^2)G^\top)$. The cost parameters for the relative state are $\ Q = 10^{-3} I_4, \  R^1 = I_2, \  R^2 = -8 I_2$.

Both players obtain noisy relative position measurements
\begin{align}
    \begin{split}
        y_t^1 ={}& C^1 x_t + v_t^1, \\
        y_t^2 ={}& C^2 x_t + v_t^2,
    \end{split}
\end{align}
with
\begin{equation*}
    C^1 = C^2 = \begin{bmatrix}
        1 & 0 & 0 & 0 \\
        0 & 1 & 0 & 0
    \end{bmatrix},
\end{equation*}
where  $v_t^1$ and $v_t^2$ are independent  noise sequences with covariances $V_1$ and $V_2$, respectively, with $V_1 = V_2 = I_2$.

\subsection{Comparing strategies with limited vs. unlimited belief order}
We used the proposed method summarized in Algorithm \ref{algorithm:forward-backward-VI} to compute infinite-horizon equilibrium strategies for the pursuer and evader. Figure 1 shows the mean pursuer and evader trajectories for initial (relative) state and (relative) state estimates $$\left(x_0, z^1_0, z^2_0\right) = \left((-10, 0, 0, 0), (-10, 0, 0, -10), (-10, 0, 0, 0)\right).$$ 
This means that the pursuer and evader start at rest separated by $10m$ in the $x$ direction, and while the evader initially has exact estimates of the relative pursuer state, the pursuer has an incorrect initial estimate of the relative evader velocity in the $y$ direction.

When both players have perfect state information, the mean trajectories move along the line connecting the players' initial positions in this case, since the initial velocities are zero. However, with imperfect information the pursuer's incorrect initial estimate of the evader velocity causes an excursion in the $y$ direction shown in Figure 1. Pursuer and evader sample trajectories are shown in Figure 2, with random realizations of the measurement noise and disturbance sequences. The average infinite horizon cost in this instance computed with \eqref{optimalaveragecost} upon convergence of Algorithm \ref{algorithm:forward-backward-VI} is $2.872\times10^{-3}$.

For comparison, we also computed equilibrium strategies with higher-order beliefs of unlimited order using the best response method described in \cite{guan2025best}. In this method, in each step each player alternatively computes an optimal best response strategy with the opposing player's strategy held fixed. At each iteration, the internal state dimension of each player's
strategy increases by $n$, as the players use their output histories to estimate increasingly higher order beliefs. This process eventually converges to an equilibrium in this instance, when increasing the belief order does not provide any additional benefit. The average infinite horizon cost in this instance computed with this best response method is $2.895\times10^{-3}$. 

We further compared the proposed method (limited belief order) and the best response method (unlimited belief order) by evaluating their equilibrium values and associated strategies across different information settings. The results appear in Table \ref{tab1} and Figure \ref{fig:strategy_comparison}. 
When both players have comparable measurement uncertainties ($V^i \approx V^j$), strategies computed using limited belief order $n$ achieve equilibrium values within $1\%$ of those obtained with unlimited belief order. Moreover, players' mean trajectories under limited belief order strategies closely approximate those under unlimited belief order strategies. 
When measurement uncertainties differ significantly between players ($V^i \gg V^j$), the equilibrium value gap increases but remains bounded within $10\%$. However, the equilibrium strategies themselves may diverge substantially depending on the evader's input penalty. Specifically, a smaller (magnitude) $R^2$ produces a larger (magnitude) feedback gain $K^2$, which amplifies the difference between limited and unlimited belief order strategies.

These results demonstrate that the proposed method computes highly effective strategies for dynamic games with partial and asymmetric information in a computationally efficient manner, particularly when players have similar measurement uncertainties. When measurement uncertainties are asymmetric, the method's performance degrades with bounded error relative to the best response method.



\begin{figure}[htbp] 
\centering
\includegraphics[width=0.7\linewidth]{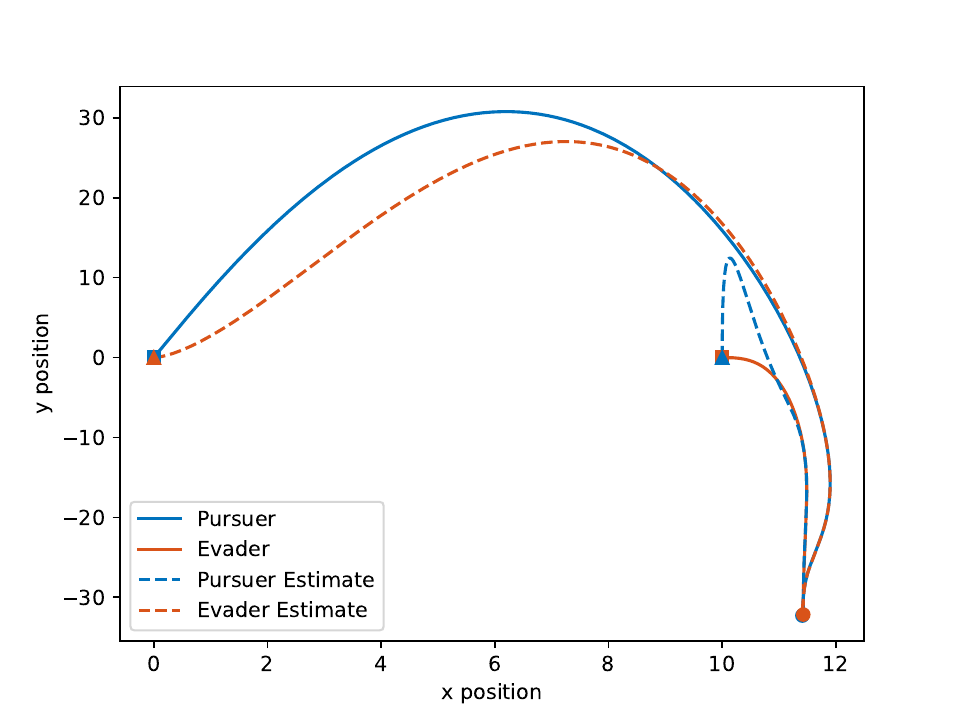}
\vspace{-0.5cm}
    \caption{Pursuer and evader mean trajectories (solid lines), along with their respective mean state estimates (dashed lines) from initial state $(x_0, z^1_0, z^2_0) = [(-10, 0, 0, 0), (-10, 0, 0, -10), (-10, 0, 0, 0)]$. The squares and triangles represent the initial positions and initial position estimates of each player, respectively. The circles represent the final positions. Total Cost: $2.872\times10^{-3}$.}
    \label{fig1}
\end{figure}

\begin{figure}[htbp] 
\centering
\includegraphics[width=0.7\linewidth]{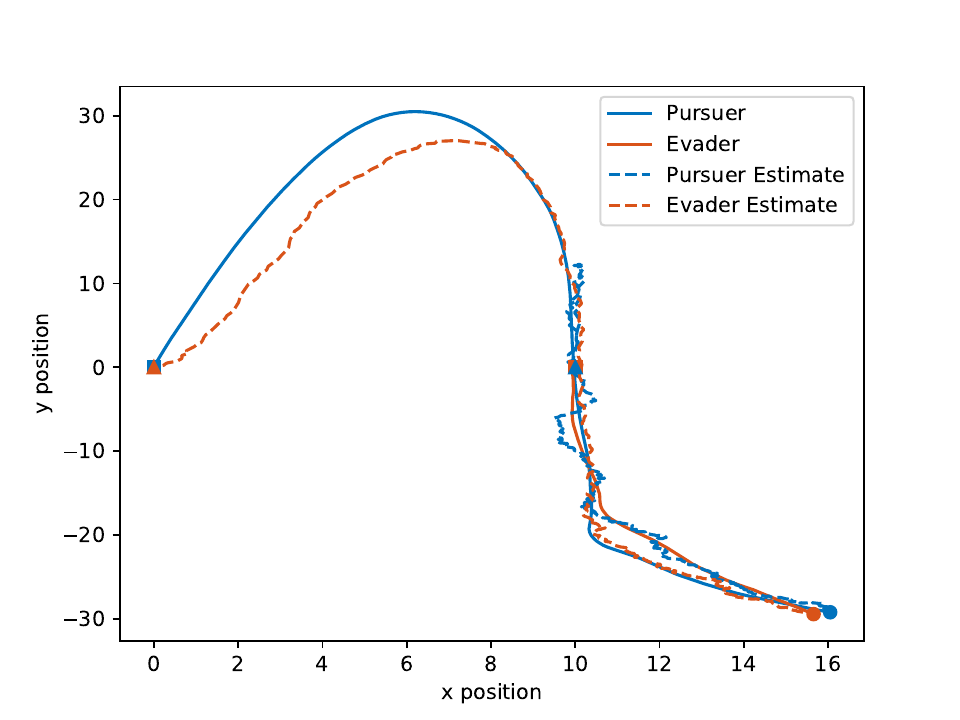}
\vspace{-0.5cm}
    \caption{Pursuer and evader sample trajectories (solid lines), along with their respective sample estimates (dashed lines) from initial state $(x_0, z^1_0, z^2_0) = [(-10, 0, 0, 0), (-10, 0, 0, -10), (-10, 0, 0, 0)]$. The squares and triangles represent the initial positions and initial position estimates of each player, respectively. The circles represent the final positions.}
    \label{fig1}
\end{figure}

\begin{center}
\begin{table*}[htbp]%
\caption{Performance gap under limited vs. unlimited belief order.\label{tab1}}
\begin{tabular*}{\textwidth}{@{\extracolsep\fill}lllll@{}}
\toprule
\textbf{Cases} & \textbf{LQG cost}  & \textbf{Total cost $J_{ubo}$$^{\tnote{\bf a}}$} & \textbf{Total cost $J_{lbo}$$^{\tnote{\bf b}}$}  & \begin{tabular}{@{}l@{}}\textbf{Performance gap ratio} \\ $\|J_{ubo} - J_{lbo} \|/J_{ubo}$ \end{tabular}  \\
\midrule
$V^1=V^2=I_2$, $R^2=-8I_2$ & $2.625\times 10^{-3}$ & $2.895 \times 10^{-3}$  & $2.872 \times 10^{-3}$  & $1\%$ \\
$V^1=10^{-6}I_2$, $V^2=10^6I_2$, $R^2=-2.6I_2$ & $1.092 \times 10^{-3}$  & $1.092 \times 10^{-3}$  & $1.193 \times 10^{-3}$  & $8\%$ \\
$V^1=10^{6}I_2$, $V^2=10^{-6}I_2$, $R^2=-1.3\times 10^5I_2$ & $3.8583$  & $4.0241$  & $3.8623$ & $4\%$ \\
\bottomrule
\end{tabular*}
\begin{tablenotes}
\item[$^{\rm a}$] Game's total cost with unlimited belief order.
\item[$^{\rm b}$] Game's total cost with limited belief order.
\end{tablenotes}
\end{table*}
\end{center}


\begin{figure*}[htbp]
    \centering
    \begin{subfigure}[b]{0.32\textwidth}
        \includegraphics[width=\textwidth]{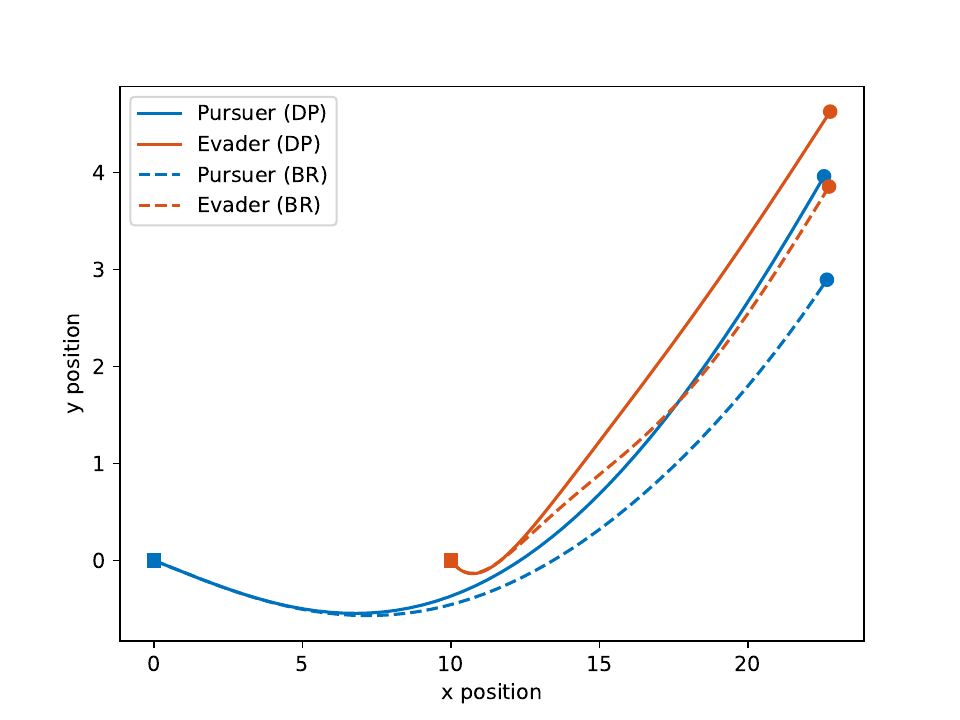}
        \caption{$V^1=V^2=I_2$, $R^2=-8I_2$.}
    \end{subfigure}
    \hfill
    \begin{subfigure}[b]{0.32\textwidth}
        \includegraphics[width=\textwidth]{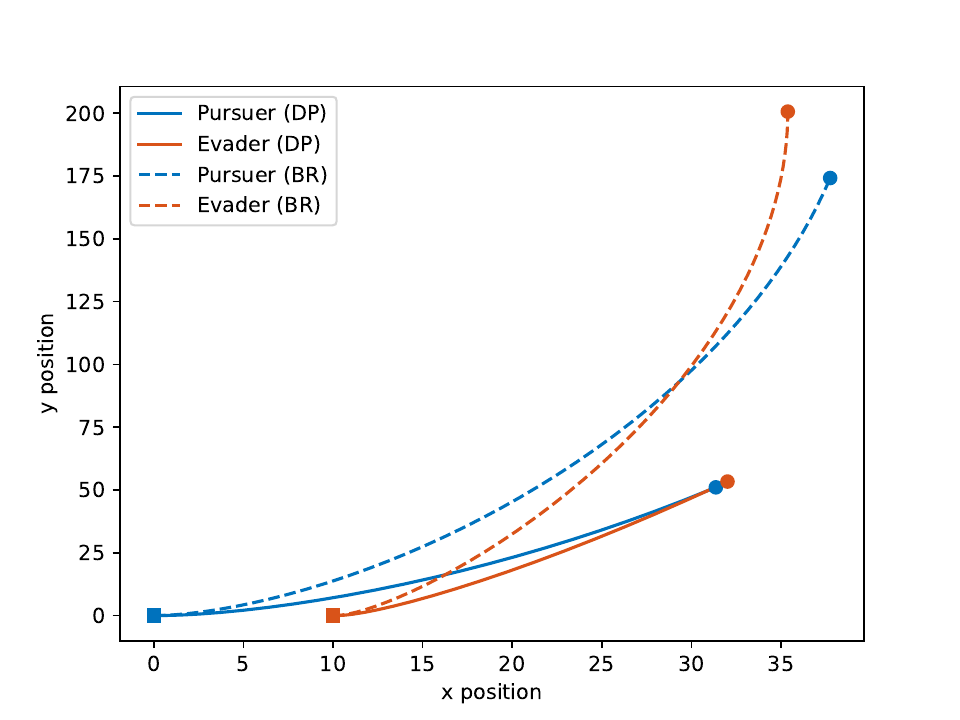}
        \caption{$V^1=10^{-6}I_2$, $V^2=10^6I_2$, $R^2=-2.6I_2$.}
    \end{subfigure}
    \hfill
    \begin{subfigure}{0.32\textwidth}
        \includegraphics[width=\textwidth]{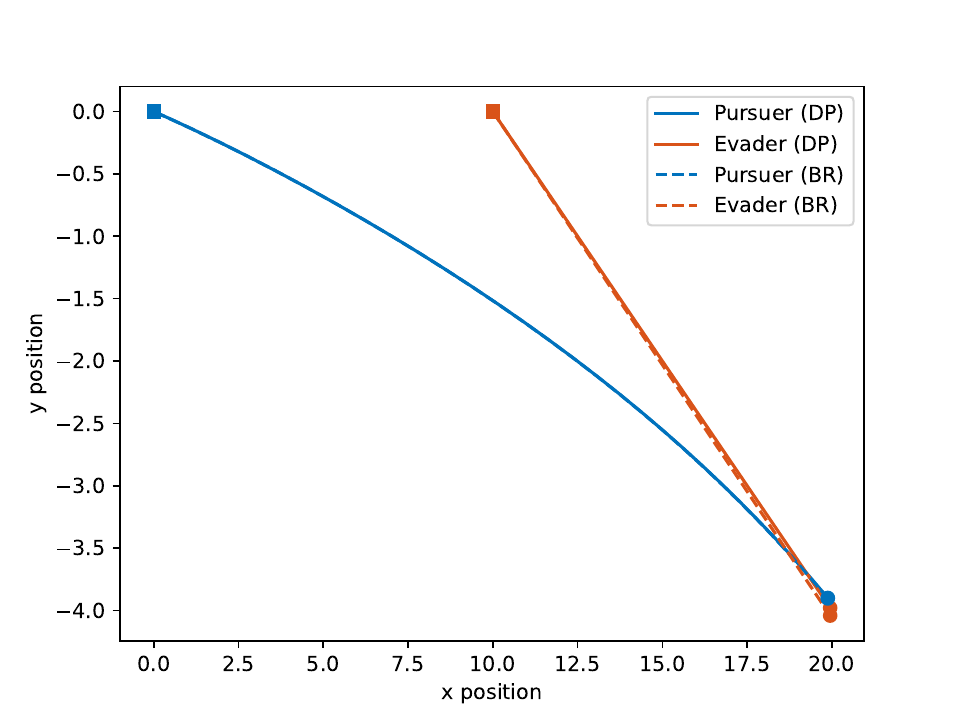}
        \caption{$V^1=10^{6}I_2$, $V^2=10^{-6}I_2$, $R^2=-1.3\times10^5I_2$.}
    \end{subfigure}
    \caption{Comparison of equilibrium strategies under limited vs. unlimited belief orders, starting from initial state $(x_0, z^1_0, z^2_0) = [(-10, 0, -0.5, 0.2), (-10, 0, -0.5, 0.2), (-10, 0, -0.5, -4.8)]$. DP: strategies computed using forward-backward dynamic programming with limited belief order. BR: strategies computed using best response dynamics with unlimited belief order.}
    \label{fig:strategy_comparison}
\end{figure*}

\subsection{Exploring asymmetries in controllability and observability}
Since the advent of differential game theory in the seminal work of Isaacs \cite{isaacs1965differential}, problems with asymmetries in controllability/maneuverability have served as important benchmarks. A classic example is the homicidal chauffeur problem, a pursuit-evasion game which pits a slow but highly maneuverable pedestrian against a faster but less maneuverable vehicle. Also, a key result in linear quadratic pursuit evasion games reveals that successful capture of an evader depends on their relative controllability as quantified by certain relative controllability Gramians \cite{ho1965differential}.

Dynamic games with partial and asymmetric information also brings the relative observability properties of the players into the picture. It is then the combination of controllability and observability properties of the players that determines the value and outcome of the game. For instance, Figure 3 shows the pursuer and evader mean trajectories when the pursuer is less maneuverable ($B^1_{24} = 0.7 \Delta t$, all other parameters the same as above), with the strategies again computed with Algorithm \ref{algorithm:forward-backward-VI}. The average infinite horizon cost in this instance is $3.833\times10^{-3}$, indicating a decrease in the pursuer's performance due to the reduced maneuverability. Similarly, Figure 4 shows the pursuer and evader mean trajectories when the pursuer has noisier sensor measurements in the $y$ direction ($V^1 = \mathrm{diag}(1, 50)$), which reduces its observability of the evader's relative state. The average infinite horizon cost in this instance is $7.572\times10^{-3}$, again indicating a decrease in the pursuer's performance due to the reduced observability.

It is entirely possible for the pursuer to be \emph{more} maneuverable but have decreased observability capabilities outweigh its maneuverability advantages. Figure 5 shows the pursuer and evader mean trajectories when the pursuer is more maneuverable ($B^1_{24} = 1.5 \Delta t$) but has noisier sensor measurements in the $y$ direction ($V^1 = \mathrm{diag}(1, 50)$), with the strategies again computed with Algorithm \ref{algorithm:forward-backward-VI}. The average infinite horizon cost in this instance is $5.539\times10^{-3}$, indicating that the pursuer's manueverabiliity advantage in the $y$ direction is overwhelmed by its noisier sensor measurements in the $y$ direction. It's even possible for the upper value of the game to become unbounded even when the pursuer has a maneuverability advantage, if its observability is sufficiently compromised.

\begin{figure}[htbp] 
\centering
\includegraphics[width=0.7\linewidth]{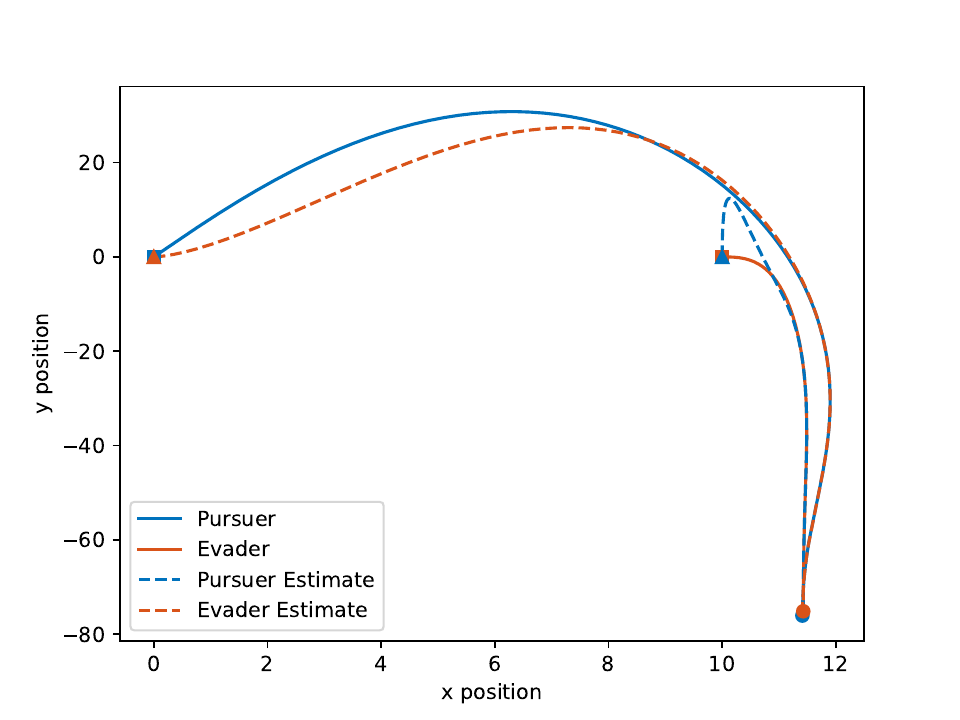}
\vspace{-0.5cm}
    \caption{\textbf{Less maneuverable pursuer.} ($B^1_{24} = 0.7 \Delta t$) Pursuer and evader mean trajectories, along with their respective mean sample estimates from a prescribed initial state. Total Cost: $3.833\times10^{-3}$.}
    \label{fig1}
\end{figure}

\begin{figure}[htbp] 
\centering
\includegraphics[width=0.7\linewidth]{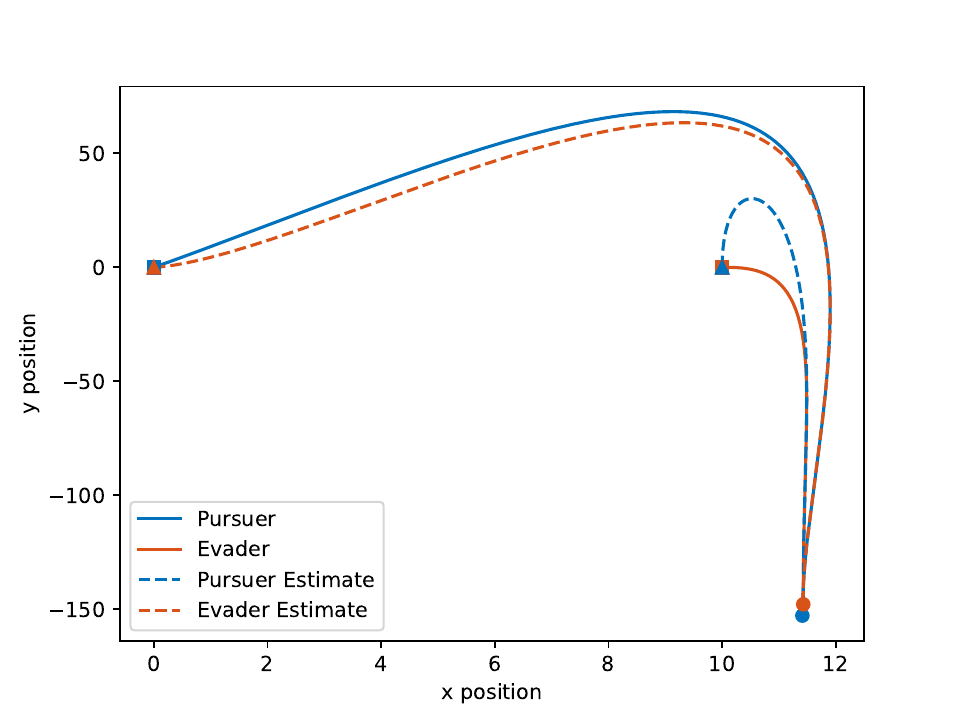}
\vspace{-0.5cm}
    \caption{\textbf{Pursuer has noisier observations.} ($V^1 = \mathrm{diag}(1, 50)$) Pursuer and evader mean trajectories, along with their respective mean sample estimates from a prescribed initial state. Total Cost: $7.572\times10^{-3}$.}
    \label{fig1}
\end{figure}

\begin{figure}[htbp] 
\centering
\includegraphics[width=0.7\linewidth]{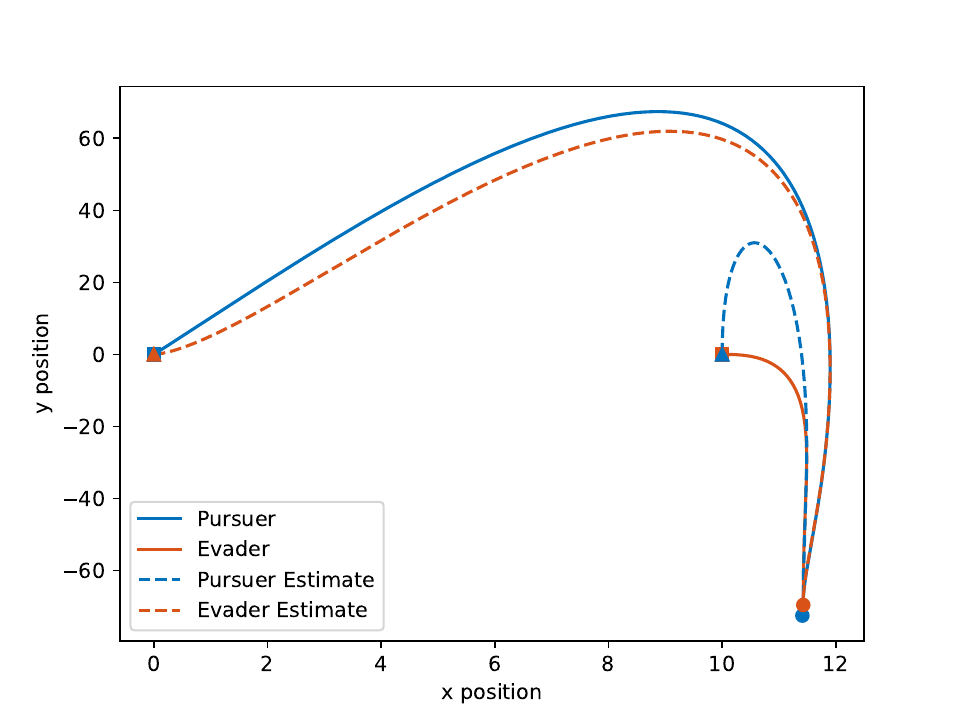}
\vspace{-0.5cm}
    \caption{\textbf{More maneuverable pursuer but with noisier observations.} ($B^1_{24} = 1.5 \Delta t$, $V^1 = \mathrm{diag}(1, 50)$) Pursuer and evader mean trajectories, along with their respective mean sample estimates from a prescribed initial state. Total Cost: $5.539\times10^{-3}$.}
    \label{fig1}
\end{figure}

\section{Extensions, Variations, and Open Problems} \label{sec:extensions}
There are many possible extensions and variations of the proposed modeling framework and methodology. These are being pursued in ongoing research.

\textbf{Modeling variations within the linear quadratic setting.} Several variations of the modeling framework that remain within the general linear quadratic setting are straightforward to handle with minor modifications to the proposed methodology. These include trajectory tracking problems, affine dynamics (or equivalently, non-zero mean disturbances), and general quadratic costs with terms that couple the state and inputs. 

\textbf{Multiplicative noise.} A more powerful generalization that can also be handled within the proposed frameworks is dynamics and measurement models with multiplicative noise (rather than just additive noise). In multiplicative noise models, the system matrices $A_t, B_t^1, B_t^2, C_t^1, C_t^2$ are allowed to be random matrices (rather than fixed) with a prescribed covariance structure over their entries. This framework enables parametric uncertainty in the system model to be encoded into the multiplicative noise to promote robustness to these uncertainties in the feedback strategies. Optimal linear output dynamic feedback strategies for these models can be computed with dynamic programming and robust Kalman filtering techniques \cite{wonham1967optimal,gravell2020learning,gravell2022policy}.

\textbf{Non-zero sum $N$-player games.} Another important extension is the non-zero sum setting with more than two players. Each player has its own cost function that in general is not directly aligned with those of other players and may feature both cooperative and non-cooperative behavior. Equilibrium concepts, including Nash equilibria and refinements, become richer and more varied. Coupled dynamic programming recursions can still be used for the backward pass in nonzero sum games \cite{bacsar1998dynamic}.

\textbf{Nonlinear dynamics and measurement models.}
Virtually all application areas feature nonlinear dynamics and measurement models. The solution for the linear quadratic dynamic game can be used as an approximation for solutions to nonlinear dynamic games. The most straightforward approximation is to simply linearize the nonlinear system about an equilibrium or a nominal trajectory, and use the linearized system matrices in the algorithms. A more sophisticated approach involves iterative linearization techniques along the lines of iLQR and differential dynamic programming. Adapting the proposed methodology to nonlinear models using iterative linearization techniques will significantly expanding the applicability and impact.

\textbf{Semidefinite programming formulations.} Many optimal control and dynamic game design problems can be formulated using semidefinite programming \cite{boyd1994linear,gahinet1994linear}. It may be possible to obtain analogous semidefinite programming formulations of the solutions proposed here.

\textbf{Forward-backward operator properties.}
The properties of the forward-backward operator described in Section \ref{Section:InfHorizon} may reveal interesting insights about convergence properties of the proposed algorithms. For example, convergence of value iteration in single-player optimal control is directly related to contractivity of the Bellman operator.

\section{Conclusions}\label{sec:conclusion}
We have formulated and studied a class of two-player zero-sum linear-quadratic stochastic dynamic games with partial and asymmetric information. To avoid an infinite regress of higher-order beliefs amongst agents and obtain computationally implementable results, we considered strategies with limited internal state dimension. We presented a novel iterative forward-backward algorithm to jointly compute belief states and equilibrium strategies and value functions for a finite-horizon problem. and a value iteration-like algorithm to jointly compute stationary belief states and equilibrium strategies for an average-cost infinite-horizon problem. An open-source implementation of the algorithms is available at \url{https://gitlab.com/scratch7473433/partialinfodynamicprogramming}.



\bmsection*{Acknowledgments}
This work was supported by the United States Air Force Office of Scientific Research under Grants FA9550-23-1-0424 and FA2386-24-1-4014, and by the National Science Foundation under Grant ECCS-2047040.





\bibliography{wileyNJD-AMA}

\end{document}